\newtheorem{theorem}{Theorem}[section]
\newtheorem{corollary}[theorem]{Corollary}
\newtheorem{lemma}[theorem]{Lemma}
\newtheorem{remark}[theorem]{Remark}
\newtheorem{conjecture}[theorem]{Conjecture}
\newcommand{\hooklongrightarrow}{\lhook\joinrel\longrightarrow}
\newcommand{\ra}{\rightarrow}
\newcommand{\lra}{\longrightarrow}
\newcommand{\ttr}{\texttt r}
\newcommand{\bC}{\mathbb C}
\newcommand{\Q}{\mathbb Q}
\newcommand{\Z}{\mathbb Z}
\newcommand{\bP}{\mathbb P}
\newcommand{\cL}{\mathcal L}
\newcommand{\cR}{\mathcal R}
\newcommand{\cC}{\mathcal C}
\newcommand{\cD}{\mathcal D}
\newcommand{\cE}{\mathcal E}
\newcommand{\ub}{\mathfrak b}
\newcommand{\sW}{\mathscr W}
\DeclareMathOperator{\la}{\mathrm la}
\DeclareMathOperator{\gl}{\mathfrak gl}
\DeclareMathOperator{\GL}{\mathrm GL}
\DeclareMathOperator{\Fil}{\mathrm Fil}
\DeclareMathOperator{\Res}{\mathrm Res}
\DeclareMathOperator{\Sym}{\mathrm Sym}
\DeclareMathOperator{\Gal}{\mathrm Gal}
\DeclareMathOperator{\Hom}{\mathrm Hom}
\DeclareMathOperator{\End}{\mathrm End}
\DeclareMathOperator{\rig}{\mathrm rig}
\DeclareMathOperator{\dR}{\mathrm dR}
\DeclareMathOperator{\unr}{\mathrm unr}
\DeclareMathOperator{\Ker}{\mathrm Ker}
\DeclareMathOperator{\pr}{\mathrm pr}
\DeclareMathOperator{\Ext}{\mathrm Ext}
\DeclareMathOperator{\Ima}{\mathrm Im}
\DeclareMathOperator{\dett}{\mathrm det}
\DeclareMathOperator{\alg}{\mathrm alg}
\DeclareMathOperator{\Ad}{\mathrm Ad}
\DeclareMathOperator{\ab}{\mathrm ab}
\DeclareMathOperator{\rec}{\mathrm rec}
\DeclareMathOperator{\pst}{pst}
\DeclareMathOperator{\DF}{DF}
\begin{document}
	\title{Locally analytic $\Ext^1$ for  $\GL_2(\Q_p)$ in de Rham non trianguline case}
	\author{Yiwen Ding}
	\date{}
	\maketitle
	\begin{abstract}We prove Breuil's conjecture on locally analytic $\Ext^1$ for $\GL_2(\Q_p)$ in de Rham non-trianguline case.
	\end{abstract}
\section{Introduction}\label{secInt}
Let $E$ be a finite extension of $\Q_p$, $\cR_E$ be the Robba ring with $E$-coefficients. The (locally analytic) $p$-adic local Langlands correspondence for $\GL_2(\Q_p)$ associates to a $(\varphi, \Gamma)$-module $D$ of rank $2$ over $\cR_E$ a locally analytic representation $\pi(D)$ of $\GL_2(\Q_p)$ over $E$ (see for example \cite[\S~0.1]{Colm16}). The representation $\pi(D)$ determines $D$ (and vice versa). Indeed, when $D$ is trianguline, this follows from the explicit structure of $\pi(D)$ and $D$. When $D$ is not trianguline, one can reduce to the case where $D$ is \'etale hence isomorphic to $D_{\rig}(\rho)$ for a certain $2$-dimensional representation $\rho$ of  the absolute Galois group $\Gal_{\Q_p}$ over $E$. In this case, by \cite[Thm. 0.2]{CD}, the universal completion of $\pi(D)$ is exactly the  Banach representation of $\GL_2(\Q_p)$ associated to $\rho$, which determines $\rho$ (hence $D$) via Colmez's Montreal functor (see \cite[Thm. 0.17 (iii)]{Colm10a}).

 The $p$-adic local Langlands correspondence is compatible with (and refines) the classical local Langlands correspondence. We recall the feature in more details. Suppose that  $D$ is de Rham of  Hodge-Tate weights $(0,k)$ with $k\geq 1$ (where we use the convention that the Hodge-Tate weight of the cyclotomic character is $1$). We can associate to $D$ a smooth $\GL_2(\Q_p)$-representation in the following way:
 \begin{equation*}
\underbrace{D \longleftrightarrow  D_{\pst}(D) \leadsto \DF}_{\text{$p$-adic Hodge theory}} \longleftrightarrow \underbrace{\ttr \longleftrightarrow \pi_{\infty}(\ttr)}_{\text{local Langlands}}
 \end{equation*}
 where 
 \begin{itemize}
 	\item $D_{\pst}(D)$ is the filtered $(\varphi, N, \Gal(L/\Q_p))$-module associated to $D$ (cf. \cite[Thm. A]{Ber08a}), where $L$ is a certain finite extension of $\Q_p$,
 	\item $\DF$ is the underlying Deligne-Fontaine module (i.e. $(\varphi, N, \Gal(L/\Q_p))$-module) of $D_{\pst}(D)$ (by forgetting the Hodge filtration),
 	\item  $\ttr$ is the $2$-dimensional Weil-Deligne representation associated to $\DF$ as in \cite[\S~4]{BS07},
 	\item $\pi_{\infty}(\ttr):=\rec^{-1}(\ttr)$ is the smooth $\GL_2(\Q_p)$-representation associated to $\ttr$ via the classical local Langlands correspondence (normalized as in \cite{HT}, in particular, the central character $\omega_{\pi_{\infty}(\ttr)}$ is $\wedge^2 \ttr \otimes_E \unr(p)$, where we view the one-dimensional Weil representation $\wedge^2 \ttr$ as a character of $\Q_p^{\times}$ via $W_{\Q_p}^{\ab}\cong \Q_p^{\times}$, normalized by sending geometric Frobenius to uniformizers, and where $\unr(p)$ is the unramified character of $\Q_p^{\times}$ sending uniformizers to $p$).  
 \end{itemize}
Put $\pi_{\alg}(\ttr, k):=\Sym^{k-1} E^2 \otimes_E \pi_{\infty}(\ttr)$, which is a locally algebraic representation of $\GL_2(\Q_p)$ (for the diagonal action). Then there is  a natural injection (\cite[Thm. 3.3.22]{Em4}):
 \begin{equation*}
 	\pi_{\alg}(\ttr,k) \hooklongrightarrow \pi(D).
 \end{equation*}
It turns out that the quotient $\pi_c(\ttr,k):=\pi(D)/\pi_{\alg}(\ttr,k)$ (that is  a locally analytic representation of $\GL_2(\Q_p)$ as well) also depends only on and determines $\{\ttr, k\}$ (see \cite[\S~0.2]{Colm18}). One may view the correspondence $\pi_c(\ttr,k) \leftrightarrow \{\ttr,k\}$ as a  local Langlands correspondence  for the simple reflection in the Weyl group $\sW\cong S_2$ of $\GL_2$ (see \cite[Remark 5.3.2 (iv)]{BD2} for related discussions).

We let $\Delta$ be the $p$-adic differential equation associated to $D$, i.e. the $(\varphi, \Gamma)$-module associated to $\DF$ equipped with the trivial Hodge filtration  via \cite[Thm. A]{Ber08a}. By \textit{loc. cit.},  the category of $p$-adic differential equations is equivalent to the category of Deligne-Fontaine modules (that is  equivalent to the category of Weil-Deligne representations). We have natural isomorphisms $D_{\pst}(\Delta)\xrightarrow{\sim} \DF$ (as Deligne-Fontaine module), and $D_{\dR}(\Delta)\xrightarrow{\sim} D_{\dR}(D)$ (as $E$-vector space). 
The Hodge filtration on $D_{\dR}(D)$ has the following form
\begin{equation}\label{HFil}\Fil^i D_{\dR}(D)=\begin{cases}
D_{\dR}(\Delta) &i\leq -k \\
\cL(D) & -k<i \leq 0\\
0 & i>0
\end{cases},
\end{equation} where $\cL(D)$ is a certain $E$-line in $D_{\dR}(\Delta)$. By \cite[Thm. A]{Ber08a}, $D$ is equivalent to the data $\{\Delta, k, \cL(D)\}$  (or equivalently $\{\ttr, k, \cL(D)\}$).  And we see when we pass from $D$ to $\{\ttr,k\}$, we lose exactly the information on $\cL(D)$. To make the notation more consistent, we write $\pi_{\alg}(\Delta,k):=\pi_{\alg}(\ttr,k)$, and $\pi_c(\Delta,k):=\pi_c(\ttr,k)$. As the whole locally analytic $\GL_2(\Q_p)$-representation $\pi(D)$ can determine $D$ while the constituents $\pi_{\alg}(\Delta,k)$, $\pi_c(\Delta,k)$ only determine $\{\Delta,k\}$, this suggests the information on $\cL(D)$ should be contained in the corresponding extension class  (see \cite[\S~2.1]{Br16} for the definition of  $\Ext^1_{\GL_2(\Q_p)}$) 
$$[\pi(D)]\in  \Ext^1_{\GL_2(\Q_p)}\big(\pi_c(\Delta,k), \pi_{\alg}(\Delta,k)\big).$$
 In \cite{Br16}, Breuil formulated the following conjecture  in this direction (see  \cite[Conj. 1.1]{Br16} for   general $\GL_n$-case):
\begin{conjecture}\label{conjEXT}
	There is a natural $E$-linear bijection
	\begin{equation}\label{EXT1}
\Ext^1_{\GL_2(\Q_p)}\big(\pi_c(\Delta,k), \pi_{\alg}(\Delta,k)\big) \xlongrightarrow{\sim} D_{\dR}(\Delta)
	\end{equation}
	such that for any de Rham  $(\varphi, \Gamma)$-module $D$ of rank $2$ over $\cR_E$ of Hodge-Tate weights $(0,k)$ with the  associated $p$-adic differential equation isomorphic to $\Delta$, the map sends  the $E$-line $E[\pi(D)]$ to $\cL(D)$.
\end{conjecture}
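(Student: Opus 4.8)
The plan is to compute the $\Ext^1$-group appearing in \eqref{EXT1}, to construct a natural $E$-linear map from it to $D_{\dR}(\Delta)$ through Colmez's $(\varphi,\Gamma)$-module machinery for $\GL_2(\Q_p)$, and then to obtain bijectivity together with the compatibility with $\cL(D)$ from a dimension count. First I would record the internal structure of the objects involved. Since $D$ is non-trianguline, $\DF$ is an irreducible $(\varphi,N,\Gal(L/\Q_p))$-module; hence $N=0$ and $\ttr$ is an irreducible Weil representation, so $\pi_\infty(\ttr)$ is supercuspidal and $\pi_{\alg}(\Delta,k)=\Sym^{k-1}E^2\otimes_E\pi_\infty(\ttr)$ is topologically irreducible. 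From Colmez's construction of $\pi(D)$, as refined by Colmez and Dospinescu, I would read off that $\pi(D)$ has socle $\pi_{\alg}(\Delta,k)$, is indecomposable (so that $[\pi(D)]\neq 0$), and that $\pi_c(\Delta,k)$ is topologically of finite length with all of its constituents built only from $\pi_\infty(\ttr)$ and algebraic representations --- in particular no locally analytic principal series occur, precisely because $\pi_\infty(\ttr)$ is supercuspidal. A short argument using the multiplicity one of $\pi_{\alg}(\Delta,k)$ in $\pi(D)$ also gives $\Hom_{\GL_2(\Q_p)}(\pi_c(\Delta,k),\pi_{\alg}(\Delta,k))=0$.

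The heart of the proof is the identity $\dim_E\Ext^1_{\GL_2(\Q_p)}(\pi_c(\Delta,k),\pi_{\alg}(\Delta,k))=2$. As $\pi_\infty(\ttr)$ is supercuspidal there is no Jacquet-module handle on $\pi_c(\Delta,k)$, so I would carry the computation out on the $(\varphi,\Gamma)$-module side: in Colmez's $\mathbf{P}^1$-model, $\pi(D)$ is recovered from $D_{\rig}(D)\boxtimes\mathbf{P}^1$, and an extension of $\pi_c(\Delta,k)$ by $\pi_{\alg}(\Delta,k)$ translates --- after passing to the appropriate dual and twist --- into an extension of $(\varphi,\Gamma)$-modules over $\cR_E$ whose associated $p$-adic differential equation is $\Delta$ and whose Hodge-Tate weights are $(0,k)$. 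Such extensions are classified by the choice of Hodge filtration, that is by the line $\cL$ inside $D_{\dR}(\Delta)$, which moves in a $2$-dimensional $E$-vector space; I would make this precise by identifying the $\Ext^1$ of locally analytic representations with the corresponding $\Ext^1$ of $(\varphi,\Gamma)$-modules (using the exactness of Colmez's functor on the relevant subcategory together with the vanishing of the parasitic $\Hom$ and $\Ext^2$ terms), and then evaluating the latter by $p$-adic Hodge theory.

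The map in \eqref{EXT1} I would then define through this same dictionary: to the class of $0\to\pi_{\alg}(\Delta,k)\to\pi\to\pi_c(\Delta,k)\to 0$ attach the Hodge line of the de Rham $(\varphi,\Gamma)$-module produced from $\pi$ by the $\mathbf{P}^1$-model, viewed inside $D_{\dR}(\Delta)$. Concretely one may pass to the universal unitary completion $\widehat{\pi}$, apply Colmez's Montreal functor to get a $2$-dimensional $\rho$, observe that the inclusion $\pi_{\alg}(\Delta,k)\hookrightarrow\pi(\rho)$ forces $\rho$ to be de Rham of Hodge-Tate weights $(0,k)$ with Weil-Deligne representation $\ttr$ (by Emerton's analysis of locally algebraic vectors and the injectivity of $\rec$), so that $D:=D_{\rig}(\rho)$ has differential equation $\Delta$; one checks that $\pi=\pi(D)$, and sends the class to $\cL(D)\subseteq D_{\dR}(D)=D_{\dR}(\Delta)$. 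Linearity comes from the additivity of the construction --- Baer sums of extensions going over to the corresponding combinations of Hodge lines inside the fixed $D_{\dR}(\Delta)$ --- and by construction the map sends $E[\pi(D)]$ to $\cL(D)$. Here I would be careful to check that the recipe is defined on all of $\Ext^1$, in particular that every class arises from some de Rham $D$, and is independent of the auxiliary choices.

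For bijectivity it then suffices to prove surjectivity, since both sides are $2$-dimensional over $E$. Granting (as in the hypothesis) that some de Rham non-trianguline $D$ with differential equation $\Delta$ and weights $(0,k)$ exists, every $E$-line $\cL\subseteq D_{\dR}(\Delta)$ yields a weakly admissible filtered Deligne-Fontaine module $\{\Delta,k,\cL\}$ --- the subobject conditions being vacuous because $\DF$ is irreducible --- hence a de Rham $D$ with $\cL(D)=\cL$; choosing two such with non-proportional lines shows the map is onto. The main obstacle is the second step: without a Jacquet- or Orlik-Strauch-type description of $\pi_c(\Delta,k)$, the $\Ext^1$-computation must be routed entirely through Colmez's $(\varphi,\Gamma)$-module correspondence, and controlling the interaction between the locally algebraic subobject and the de Rham filtration there --- that is, establishing that the translated $\Ext^1$ is exactly $2$-dimensional and that the comparison map is an isomorphism --- is the delicate core of the argument; a secondary difficulty is verifying in the third step that the natural map is everywhere defined and genuinely $E$-linear, rather than merely well defined up to scalars on the classes that manifestly come from Galois representations.
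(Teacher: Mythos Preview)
Your overall architecture is sound --- construct an $E$-linear map from the locally analytic $\Ext^1$ to $D_{\dR}(\Delta)$, get surjectivity from the existence of $\pi(D)$ for each Hodge line, and finish with the bound $\dim_E\Ext^1\le 2$ --- and you correctly locate the real difficulty in this last step. But your proposed mechanism for it is not a proof. You write that you would ``identify the $\Ext^1$ of locally analytic representations with the corresponding $\Ext^1$ of $(\varphi,\Gamma)$-modules using the exactness of Colmez's functor on the relevant subcategory together with the vanishing of the parasitic $\Hom$ and $\Ext^2$ terms.'' There is no such exactness theorem to cite: the correspondence $D\mapsto\pi(D)$ is a construction, not a functor known to induce isomorphisms on $\Ext$-groups in the locally analytic category, and the Montreal functor lives on Banach representations and goes the other way. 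The identification you want \emph{is} the theorem; invoking it as input is circular. Your alternative definition of the map via universal unitary completion and the Montreal functor has the same defect: it only makes sense once you already know that every extension class arises from some de Rham $D$, and its $E$-linearity is genuinely unclear (you send a line $E[\pi]$ to a line $\cL(D)$, but there is no evident additive structure on Hodge lines matching Baer sum).

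The paper avoids both issues by working with the functor $F(\pi)$ of \cite{BD2}, which sends a generalized $(\varphi,\Gamma)$-module $D$ to $\varinjlim_r\Hom_{(\psi,\Gamma)}(\pi^\vee,D_r)$. The first step (Theorem~\ref{thmDel}) shows $F(\pi(\Delta))$ is representable by $\check\Delta$; the key input is Colmez's identification $\pi(\Delta,1)[u_+]\cong\pi_\infty(\Delta)^{\oplus 2}$, which bounds by $2$ the rank of any target through which $\pi(\Delta)^\vee$ maps densely (Lemma~\ref{keyL}). Combined with the representability of $F(\pi_{\alg}(\Delta,k))$ by $\cR_E(x^{1-k})/t^k$, \cite[Thm.~4.1.5]{BD2} then produces directly an $E$-linear map
\[
\cE:\ \Ext^1_{\GL_2(\Q_p)}\big(\pi_c(\Delta,k),\pi_{\alg}(\Delta,k)\big)\longrightarrow \Ext^1_{(\varphi,\Gamma)}\big(\cR_E(x^{1-k})/t^k,\check\Delta\big)\cong D_{\dR}(\Delta),
\]
no completions or Montreal functor needed. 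Surjectivity is immediate from $F(\pi(D))\cong\check D$. Injectivity (Theorem~\ref{thmext}) is argued by hand: assuming $\cE([\pi])=0$ for a non-split $\pi$, one sets $M:=\Ker(\pi^\vee\to\check\Delta_r)$, shows $M$ equals the $t$-power torsion in $\pi^\vee$, uses the identity $[(u_+)^{n+1},u_-]=n(u_+)^n(h+n)$ in $\text{U}(\gl_2)$ together with the Weyl element to prove $M$ is $\GL_2(\Q_p)$-stable, and checks $M\neq 0$ via a Kirillov-model multiplicity count (again using that the $u_+$-kernel has exactly two copies). This splits $\pi$, a contradiction. In particular your remark that ``there is no Jacquet-module handle on $\pi_c(\Delta,k)$'' is misleading: the functor $F$, built from the $N(\Z_p)$-action on $\pi^\vee$, is precisely such a handle.
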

The conjecture was proved in the trianguline case (or equivalently, when $\Delta$ (or equivalently $\ttr$) is reducible) in \cite[\S~3.1]{Br16}. The proof relied on a direct calculation of $\Ext^1_{\GL_2(\Q_p)}\big(\pi_c(\Delta,k), \pi_{\alg}(\Delta,k)\big)$. Indeed, when $D$ (or equivalently $\Delta$) is trianguline, the irreducible constituents of $\pi(D)$ are among those that appear in locally analytic principal series, so such a  calculation can be carried out. In this note, we prove the conjecture in de Rham non-trianguline case hence complete all cases. In fact, we prove a refined version of the conjecture given in \cite[Conj. 5.3.1]{BD2} (see Corollary \ref{corcE} and Theorem \ref{thmext}), which describes the bijection in Conjecture  \ref{conjEXT} in a functorial way.  

\begin{remark}
	When $\Delta$ is de Rham non-trianguline, by \cite[Thm. 0.6]{Colm18}, there is an injective $E$-linear map
	\begin{equation}\label{ext01}D_{\dR}(\Delta) \hooklongrightarrow \Ext^1_{\GL_2(\Q_p)}\big(\pi_c(\Delta,k), \pi_{\alg}(\Delta,k)\big)
	\end{equation} satisfying the same property as (the inverse) of  (\ref{EXT1}). Hence $\dim_E \Ext^1_{\GL_2(\Q_p)}\big(\pi_c(\Delta,k), \pi_{\alg}(\Delta,k)\big)\geq 2$, and one may prove the conjecture by showing the equality holds. Indeed, by \cite[Thm. 0.6 (iii)]{Colm18}, one has an extension (which is the universal extension \textit{a posteriori}, where $\pi(\Delta,k)$ is the representation $\Pi(M,k)$ of \textit{loc. cit})
	\begin{equation}\label{univ}
	0 \ra \pi_{\alg}(\Delta,k)\otimes_E D_{\dR}(\Delta) \ra \pi(\Delta,k) \ra \pi_c(\Delta,k) \ra 0,
	\end{equation}
	satisfying that for  any de Rham  $(\varphi, \Gamma)$-module $D$ of rank $2$ over $\cR_E$ of Hodge-Tate weights $(0,k)$ with the  associated $p$-adic differential equation isomorphic to $\Delta$, $\pi(D)\cong \pi(\Delta,k)/(\pi_{\alg}(\Delta,k) \otimes_E \cL(D))$. The extension class $[\pi(\Delta,k)]$ induces via the natural cup-product
	\begin{equation*}
	\Ext^1_{\GL_2(\Q_p)}\big(\pi_c(\Delta,k), \pi_{\alg}(\Delta,k)\otimes_E D_{\dR}(\Delta)\big) \times D_{\dR}(\Delta)^{\vee} \ra \Ext^1_{\GL_2(\Q_p)}\big(\pi_c(\Delta,k), \pi_{\alg}(\Delta,k)\big)
	\end{equation*}
	an $E$-linear map
	\begin{equation}\label{ext00}D_{\dR}(\Delta)^{\vee} \lra \Ext^1_{\GL_2(\Q_p)}\big(\pi_c(\Delta,k), \pi_{\alg}(\Delta,k)\big)
	\end{equation}
	sending an $E$-line $\cL(D)^{\perp}=(D_{\dR}(\Delta)/\cL(D))^{\vee} \hookrightarrow D_{\dR}(\Delta)^{\vee}$ to $E [\pi(D)]$. Note that (\ref{ext00}) is injective, as for different $E$-lines $\cL(D_1)\neq \cL(D_2)$, we have $D_1\ncong D_2$ hence $\pi(D_1)\ncong \pi(D_2)$. Let $e_1, e_2$ be a basis of $D_{\dR}(\Delta)$, and $e_i^*\in D_{\dR}(\Delta)^{\vee}$ such that $e_i^*(e_j)=\begin{cases}1 & i=j \\ 0 & i\neq j\end{cases}$. We see the $E$-linear bijective map $D_{\dR}(\Delta) \xrightarrow{\sim} D_{\dR}(\Delta)^{\vee}$ given by  $e_1 \mapsto e_2^*$, $e_2\mapsto -e_1^*$ sends each $E$-line $\cL$ to $\cL^{\perp}$. This bijection pre-composed with (\ref{ext00}) gives then the injection in (\ref{ext01}). Finally, we remark that  by \cite[Thm. 1.4]{DLB}, the  universal extension (\ref{univ}) can be realized in the de Rham complex of the coverings of Drinfeld's upper half-plane.
\end{remark}

\section{Main results}
\label{secMain}
Before stating our main results, we quickly introduce some more notation.  For $r\in \Q_{>0}$, let $\cR_E^r$ be the  Fr\'echet space of $E$-coefficient rigid analytic functions on the annulus $p^{-\frac{1}{r}}\leq |\cdot|<1$ where $|\cdot|$ is the norm on $\bC_p$ normalized such that $|p|=p^{-1}$.
We have $\cR_E\cong \varinjlim_r \cR_E^r$. Let $\cR_E^+$ be the Fr\'echet space of $E$-coefficient rigid analytic functions on  the open unit disk $|\cdot|<1$:
\begin{equation*}
	\cR_E^+=\big\{\sum_{i=0}^{+\infty} a_i X^i\ |\ a_i\in E \text{ for all $i$, and } |a_i| r^i \ra 0, i \ra +\infty \text{ for all $0\leq r <1$}\big\}.
\end{equation*}We have  $\cR_E^+\hookrightarrow \cR_E^r$ for all $r$. The Robba ring $\cR_E$ is equipped with a natural (standard) action of $\Gamma\cong \Z_p^{\times}$ and operators $\varphi$ and $\psi$. Recall that the $\Gamma$-action sends $\cR_E^+$ (resp. $\cR_E^r$) to $\cR_E^+$ (resp. $\cR_E^r$), and the $\psi$-operator sends $\cR_E^+$ (resp. $\cR_E^r$) to $\cR_E^+$ (resp. $\cR_E^r$ for $r\in \Q_{>p-1}$). Let $D$ be a generalized $(\varphi, \Gamma)$-module over $\cR_E$ (cf. \cite[\S~4.1]{Liu07}, noting $D$ is allowed to  have $t$-torsions, $t=\log (1+X)$). Recall (see \cite[Remark 2.2.2]{BD2} and the discussion above it) there exist $r\in \Q_{>p-1}$, and a generalized $(\varphi, \Gamma)$-module $D_r$ over $\cR_E^r$  (cf. \textit{loc. cit.}) such that $f_r: D_r \otimes_{\cR_E^r} \cR_E \xrightarrow{\sim} D$. In fact, such $\{r, D_r, f_r\}$ form a filtered category $I(D)$, and $\varinjlim_{(r, f_r, D_r)\in I(D)} D_r\xrightarrow{\sim} D$ (see the discussion above \cite[Remark 2.2.4]{BD2}). 

Recall (see for example \cite[\S~1.1.2]{Colm18}) that $\cR_E^+$ is naturally isomorphic to the locally analytic distribution algebra $\cD(\Z_p,E)=\cC^{\la}(\Z_p,E)^{\vee}$ of $\Z_p$. Under this isomorphism, the operators $\varphi$, $\psi$, and $\gamma \in \Gamma$ can be described as follows: for $\mu\in \cD(\Z_p,E)$, $f\in \cC^{\la}(\Z_p,E)$,
\begin{equation*}
\varphi(\mu)(f)=\mu([x \mapsto f(px)]), \ \psi(\mu)(f)=\mu([x\mapsto f(\frac{x}{p})]),
\end{equation*}
\begin{equation*}\gamma (\mu)(f)=\mu([x\mapsto f(\gamma x)]).
\end{equation*}
The element $t\in \cR_E^+$ (resp. $X$) corresponds to the distribution $f \mapsto f'(0)$ (resp. $f \mapsto f(1)-f(0)$).

Let $\pi$ be an admissible locally analytic representation of $\GL_2(\Q_p)$ over $E$.  The continuous dual $\pi^{\vee}$ of $\pi$ (equipped with the strong topology) is then a Fr\'echet space over $E$.  The action of $N(\Z_p)=\begin{pmatrix} 1 & \Z_p \\ 0 & 1\end{pmatrix}$ on $\pi$ induces a (separately-continuous)  $\cR_E^+$-module structure  on $\pi^{\vee}$. Note that $t\in \cR_E^+$ acts on $\pi^{\vee}$ via the element   $u_+:=\begin{pmatrix}
	0 & 1  \\ 0 & 0 
\end{pmatrix}$ in the Lie algebra $\gl_2$ of $\GL_2(\Q_p)$, and we identify $u_+$ and $t$ frequently without further mention. Moreover, the $\cR_E^+$-module $\pi^{\vee}$ is equipped with an operator $\psi$  given by the action of $\begin{pmatrix} \frac{1}{p} & 0 \\ 0 & 1\end{pmatrix}$, and with an action of $\Gamma\cong \Z_p^{\times}$ given by the action of $\begin{pmatrix} \Z_p^{\times}  & 0 \\ 0 & 1\end{pmatrix}$ satisfying
\begin{equation*}
\psi(\varphi(x) v)=x\psi(v), \gamma(xv)=\gamma(x) \gamma(v)
\end{equation*}
for $x\in \cR_E^+$, $v\in \pi^{\vee}$ and $\gamma \in \Gamma$. Recall in \cite[\S~2.3]{BD2} (see in particular \cite[Ex. 2.3.3]{BD2}), we associated to $\pi$ a (covariant) functor  $F(\pi)$ from the category of generalized $(\varphi, \Gamma)$-modules to the category of $E$-vector spaces:
\begin{equation*}
F(\pi)(D)=\varinjlim_{(r,f_r,D_r)\in I(D)} \Hom_{(\psi, \Gamma)}(\pi^{\vee}, D_r),
\end{equation*}
where $\Hom_{(\psi, \Gamma)}$ consists of continuous $\cR_E^+$-linear morphisms that are $(\psi, \Gamma)$-equivariant. Note that if $D$ has no $t$-torsion, then by \cite[Cor. 8.9]{NFA}, we have $F(\pi)(D)=\Hom_{(\psi, \Gamma)}(\pi^{\vee}, D)$ (where $D$ is equipped with the inductive limit topology). Let $M(\Q_p):=\bigg\{\begin{pmatrix}
a & b \\ 0 & 1
\end{pmatrix}\ |\ a\in \Q_p^{\times}, \ b\in \Q_p\bigg\}$. By definition, the functor $F(\pi)$ only depends on $\pi|_{M(\Q_p)}$.

Our first result is on the representability of $F(\pi)$ in de Rham non-trianguline case. Namely, let $\Delta$ be an irreducible  $(\varphi, \Gamma)$-module free of rank $2$ over $\cR_E$, de Rham of constant Hodge-Tate weight $0$. Let $\pi(\Delta)$ be the locally analytic representation associated to $\Delta$ (cf. \cite[\S~2.1]{Colm18}) normalized such that the central character $\omega_{\Delta}$ of $\pi(\Delta)$ satisfies $\cR_E(\omega_{\Delta})\cong \wedge^2 \Delta \otimes_{\cR_E} \cR_E(\varepsilon^{-1})$, where $\varepsilon=z |z|^{-1}: \Q_p^{\times} \ra E^{\times}$ and for a continuous character $\delta: \Q_p^{\times} \ra E^{\times}$, we denote by $\cR_E(\delta)$ the associated rank one $(\varphi, \Gamma)$-module.  Let $\check{\Delta}:=\Delta^{\vee} \otimes_{\cR_E} \cR_E(\varepsilon) \cong \Delta \otimes_{\cR_E} \cR_E(\omega_{\Delta}^{-1})$ be the Cartier dual of $\Delta$.
\begin{theorem}\label{thmDel}
	The functor $F(\pi(\Delta))$ is representable by $\check{\Delta}$, i.e. for any generalized $(\varphi,\Gamma)$-module $D$, $F(\pi(\Delta))(D)=\Hom_{(\varphi, \Gamma)}(\check{\Delta}, D)$. 
\end{theorem}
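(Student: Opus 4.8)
\emph{Overall strategy.} The plan is to produce one canonical $M(\Q_p)$-equivariant map $\iota_\Delta\colon\pi(\Delta)^\vee\to\check\Delta$, use it to build a natural transformation $\eta_D\colon\Hom_{(\varphi,\Gamma)}(\check\Delta,D)\to F(\pi(\Delta))(D)$, and then show $\eta_D$ is bijective for every generalized $(\varphi,\Gamma)$-module $D$. The map $\iota_\Delta$ comes from the $\bP^1$-machinery recalled in \cite{Colm18}: the structure of $\pi(\Delta)$ yields a natural $\GL_2(\Q_p)$-equivariant map $\pi(\Delta)^\vee\to\check\Delta\boxtimes\bP^1$, and $\iota_\Delta$ is its composition with $\Res_{\Z_p}\colon\check\Delta\boxtimes\bP^1\to\check\Delta\boxtimes\Z_p=\check\Delta$. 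Because $N(\Z_p)$, $\diag(1/p,1)$, $\diag(\Z_p^\times,1)$ and $\diag(p,1)$ act on $\check\Delta\boxtimes\Z_p$ through $\cR_E^+$, $\psi$, $\Gamma$ and $\varphi$ respectively, $\iota_\Delta$ is continuous, $\cR_E^+$-linear and $(\varphi,\psi,\Gamma)$-equivariant; hence for $g\in\Hom_{(\varphi,\Gamma)}(\check\Delta,D)$ and any $(r,f_r,D_r)\in I(D)$ with $g(\check\Delta)\subseteq f_r(D_r)$, the composite $f_r^{-1}\circ g\circ\iota_\Delta$ lies in $\Hom_{(\psi,\Gamma)}(\pi(\Delta)^\vee,D_r)$ and its image in the colimit is independent of the choices, which defines $\eta_D$.

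\emph{Reduction to the $t$-torsion-free case.} The functors $\Hom_{(\varphi,\Gamma)}(\check\Delta,-)$ and $D\mapsto\Hom_{(\psi,\Gamma)}(\pi(\Delta)^\vee,D)$ are left exact, and filtered colimits are exact, so $F(\pi(\Delta))$ is left exact in the relevant sense; combining this with the dévissage of generalized $(\varphi,\Gamma)$-modules into $t$-torsion and $t$-torsion-free pieces (cf. \cite{Liu07}) and the compatibility of $\eta$ with short exact sequences, it is enough to prove $\eta_D$ bijective when $D$ has no $t$-torsion. For such $D$ one has $F(\pi(\Delta))(D)=\varinjlim_{(r,f_r,D_r)\in I(D)}\Hom_{(\psi,\Gamma)}(\pi(\Delta)^\vee,D_r)$ by \cite[Cor. 8.9]{NFA}, and each $D_r$ is $t$-torsion-free with $\bigcap_n t^nD_r=0$.

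\emph{Core computation.} For $D$ without $t$-torsion I would prove: \textbf{(A)} $\iota_\Delta$ induces an isomorphism of $(\psi,\Gamma)$-modules from a quotient of $\pi(\Delta)^\vee$ onto the canonical $\psi$- and $\Gamma$-stable submodule $\check\Delta^\natural\subseteq\check\Delta$ of Colmez (of finite codimension in $\check\Delta$ and $(\varphi,\Gamma)$-generating it over $\cR_E^+$), and every continuous $(\psi,\Gamma)$-equivariant $\cR_E^+$-linear map $\pi(\Delta)^\vee\to D_r$ factors through this quotient — the point being that in the $\bP^1$-picture the kernel of $\iota_\Delta$ is the part of $\pi(\Delta)^\vee$ supported on $\bP^1\setminus\Z_p$, which is $t$-divisible and so maps into $\bigcap_n t^nD_r=0$; \textbf{(B)} precomposition with $\iota_\Delta$ then identifies $\Hom_{(\psi,\Gamma)}(\pi(\Delta)^\vee,D_r)$ with $\Hom_{(\psi,\Gamma)}(\check\Delta^\natural,D_r)$, and a standard argument (using $\psi\varphi=\id$, the fact that $\check\Delta^\natural$ generates $\check\Delta$ under $\varphi$ and $\cR_E^+$, and the torsion-freeness of $D$) shows every element of the latter extends uniquely to a $(\varphi,\Gamma)$-equivariant map $\check\Delta\to D$. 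Passing to the colimit over $I(D)$ gives $F(\pi(\Delta))(D)=\Hom_{(\varphi,\Gamma)}(\check\Delta,D)$; unwinding the identifications shows this isomorphism is $\eta_D$, and $\eta_D$ is non-degenerate because $\iota_\Delta\ne0$ and $\check\Delta$ is irreducible, so no nonzero $g\colon\check\Delta\to D$ has $g\circ\iota_\Delta=0$.

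\textbf{Main obstacle.} The hard part is step \textbf{(A)}: one must make the $\bP^1$-construction attached to $\pi(\Delta)$ in \cite{Colm18} explicit enough in the de Rham non-trianguline case to describe $\pi(\Delta)^\vee$ as an $M(\Q_p)$-module, to identify $\iota_\Delta$ together with its kernel and image, and — most delicately — to obtain the \emph{upper} bound $\dim_E\Hom_{(\psi,\Gamma)}(\pi(\Delta)^\vee,D_r)\le\dim_E\Hom_{(\varphi,\Gamma)}(\check\Delta,D)$; the reverse inequality comes for free from the existence of $\eta_D$. The admissibility of $\pi(\Delta)$ and the irreducibility of $\Delta$ should be the key inputs here. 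A secondary, purely technical point is the careful treatment of $t$-torsion and of the filtered category $I(D)$ underlying the dévissage in the second step.
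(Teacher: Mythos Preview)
Your construction of $\iota_\Delta$ and of the natural transformation $\eta_D$ is correct and is exactly the paper's starting point. Injectivity of $\eta_D$ is straightforward because $\iota_\Delta$ becomes surjective after $\otimes_{\cR_E^+}\cR_E^r$. The entire content lies in the surjectivity of $\eta_D$, and this is where your proposal has a real gap.

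Your step \textbf{(A)} rests on a misconception: by \cite[Prop.~2.20]{Colm18} the map $\iota_\Delta$ is \emph{injective}, so its kernel is zero and there is no nontrivial quotient to pass to. (You describe the kernel as ``the part of $\pi(\Delta)^\vee$ supported on $\bP^1\setminus\Z_p$'', but $\pi(\Delta)^\vee$ sits inside $\check\Delta\boxtimes\bP^1$ as a subspace, and its intersection with the sections supported away from $\Z_p$ is trivial.) With \textbf{(A)} vacuous, everything collapses onto \textbf{(B)}: identifying $\pi(\Delta)^\vee$ via $\iota_\Delta$ with a $(\psi,\Gamma)$-stable $\cR_E^+$-submodule of $\check\Delta_r$ that generates it over $\cR_E^r$, show that every continuous $(\psi,\Gamma)$-equivariant $\cR_E^+$-linear map to some $D_r$ extends to a $(\varphi,\Gamma)$-morphism out of $\check\Delta$. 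You call this a ``standard argument (using $\psi\varphi=\id$ \ldots)'', but it is not: for a general $(\psi,\Gamma)$-stable generating submodule there is no reason such an extension exists, and neither admissibility of $\pi(\Delta)$ nor irreducibility of $\Delta$ gives it directly. You yourself flag this upper bound as the main obstacle without proposing a mechanism.

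The paper supplies the missing mechanism, and it is not along the lines you sketch. Given $\mu\in\Hom_{(\psi,\Gamma)}(\pi(\Delta)^\vee,D_r)$ one forms the ``graph'' map $\tilde\mu=(\iota_\Delta,\mu)$ into $\check\Delta_r\oplus D_r$; the closed $\cR_E^r$-submodule $M_r$ generated by its image is a generalized $(\varphi,\Gamma)$-module. The key input is Lemma~\ref{keyL}: any such $M_r$ (after passing to a good $M_{r'}$) has rank at most $2$. This is proved by reducing modulo $t$ and invoking the structural fact from \cite[Lemma~3.24, Thm.~3.31]{Colm18} that $\pi(\Delta,1)[u_+]\cong\pi_\infty(\Delta)^{\oplus 2}$, together with the already-known representability of $F$ for locally algebraic representations \cite[Thm.~3.3.1]{BD2}; these force the map $\pi(\Delta)^\vee/t\to M_{r'}/t$ to factor through $(\cR_E^{r'}/t)^{\oplus 2}$. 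Once $\rk M_{r'}\le 2$, the surjection $M_{r'}\twoheadrightarrow\check\Delta_{r'}$ induced by $\iota_\Delta$ must be an isomorphism (since $\check\Delta_{r'}$ is free of rank $2$), and then $\mu=\pr_2\circ(\text{iso})^{-1}\circ\iota_\Delta$ factors through $\iota_\Delta$. So the decisive input is the explicit description of $\pi(\Delta)[u_+]$ as two copies of the Kirillov model, not an abstract extension principle for $(\psi,\Gamma)$-maps; your d\'evissage to the $t$-torsion-free case is also unnecessary once one argues this way.
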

\begin{remark}
 The same statement in the trianguline case was obtained  in \cite[Thm. 5.4.2 (i)]{BD2} (see Step 2 \& 3 of the proof), where a key ingredient is the representability of $F(\pi)$ for locally analytic principal series $\pi$. While, our proof of Theorem \ref{thmDel}  is based on Colmez's results in \cite{Colm18} and the representability of $F(\pi)$ for locally algebraic representations $\pi$. 
\end{remark}
For $k\in \Z$, let $\pi(\Delta,k)$ be the locally analytic representation $\Pi(M,k)$ in \cite[Thm. 0.8 (iii)]{Colm18} (for $M=\DF$, the irreducible Deligne-Fontaine module associated to $\Delta$). Recall by \textit{loc. cit.}, there exists an isomorphism of topological $E$-vector spaces: $\partial: \pi(\Delta) \ra \pi(\Delta)$ such that the following maps 
\begin{equation*}
g=\begin{pmatrix}
a & b \\ c&d
\end{pmatrix}: \pi(\Delta) \ra \pi(\Delta), \ v \mapsto (-c \partial +a)^k  \begin{pmatrix}
a & b \\ c & d 
\end{pmatrix}(v)
\end{equation*}
define a(nother) locally analytic $\GL_2(\Q_p)$-action and the resulting representation is isomorphic to  $\pi(\Delta,k)$. As $B(\Q_p)$-representation, we have $\pi(\Delta,k)\cong \pi(\Delta) \otimes_E (x^k \otimes 1)$ hence:
\begin{equation*}
F(\pi(\Delta))(D)\cong F(\pi(\Delta,k))(D\otimes_{\cR_E} \cR_E(x^{-k}))
\end{equation*}
for all generalized $(\varphi, \Gamma)$-modules $D$. We then deduce from Theorem 1.1:
\begin{corollary}\label{fpk1}
	The functor $F(\pi(\Delta,k))$ is representable by $t^{-k} \check{\Delta}$.
\end{corollary}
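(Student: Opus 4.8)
The plan is to deduce the corollary formally from Theorem~\ref{thmDel} by a twist. The one input I need, already recorded just above the statement, is the isomorphism $F(\pi(\Delta))(D)\cong F(\pi(\Delta,k))(D\otimes_{\cR_E}\cR_E(x^{-k}))$, natural in the generalized $(\varphi,\Gamma)$-module $D$; it comes from the identification $\pi(\Delta,k)\cong\pi(\Delta)\otimes_E(x^k\otimes 1)$ of $B(\Q_p)$-representations together with the fact that $F(\pi)$ depends only on $\pi|_{M(\Q_p)}$. Since $D\otimes_{\cR_E}\cR_E(x^k)$ is again a generalized $(\varphi,\Gamma)$-module, I would apply this with $D$ replaced by $D\otimes_{\cR_E}\cR_E(x^k)$, which rewrites it as $F(\pi(\Delta,k))(D)\cong F(\pi(\Delta))(D\otimes_{\cR_E}\cR_E(x^k))$, and then feed the right-hand side into Theorem~\ref{thmDel}.

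Concretely, I would chain the following isomorphisms, all natural in $D$:
\begin{align*}
F(\pi(\Delta,k))(D) &\cong F(\pi(\Delta))\big(D\otimes_{\cR_E}\cR_E(x^k)\big) = \Hom_{(\varphi,\Gamma)}\big(\check{\Delta},\,D\otimes_{\cR_E}\cR_E(x^k)\big)\\
&\cong \Hom_{(\varphi,\Gamma)}\big(\check{\Delta}\otimes_{\cR_E}\cR_E(x^{-k}),\,D\big)\cong\Hom_{(\varphi,\Gamma)}\big(t^{-k}\check{\Delta},\,D\big).
\end{align*}
The second isomorphism is Theorem~\ref{thmDel}. The third uses that $-\otimes_{\cR_E}\cR_E(x^k)$ is an auto-equivalence of the category of generalized $(\varphi,\Gamma)$-modules, with quasi-inverse $-\otimes_{\cR_E}\cR_E(x^{-k})$, so that $\Hom(\check{\Delta},D\otimes\cR_E(x^k))\cong\Hom(\check{\Delta}\otimes\cR_E(x^{-k}),D)$ functorially in $D$. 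The last isomorphism uses the identification below.

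It remains to identify $\check{\Delta}\otimes_{\cR_E}\cR_E(x^{-k})$ with $t^{-k}\check{\Delta}$. Recall that multiplication by $t=\log(1+X)$ gives an isomorphism of $(\varphi,\Gamma)$-modules $\cR_E(x)\xrightarrow{\sim}t\cR_E$, because $\varphi(t)=pt$ and $\gamma(t)=\gamma t$ for $\gamma\in\Gamma\cong\Z_p^{\times}$ (the element $t$ corresponds to $f\mapsto f'(0)$ under $\cR_E^+\cong\cD(\Z_p,E)$); hence $\cR_E(x^{-k})\cong t^{-k}\cR_E$ as submodules of $\cR_E[1/t]$, and tensoring with $\check{\Delta}$ over $\cR_E$ yields $\check{\Delta}\otimes_{\cR_E}\cR_E(x^{-k})\cong t^{-k}\check{\Delta}$, which is a free (hence honest) $(\varphi,\Gamma)$-module because $\check{\Delta}$ is. Inserting this into the chain proves $F(\pi(\Delta,k))(D)=\Hom_{(\varphi,\Gamma)}(t^{-k}\check{\Delta},D)$ naturally in $D$, i.e.\ $F(\pi(\Delta,k))$ is representable by $t^{-k}\check{\Delta}$. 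There is no genuine obstacle here: all the content sits in Theorem~\ref{thmDel} (and, behind it, in Colmez's \cite{Colm18}), and the corollary is a formal twisting argument. The only points demanding care are the direction of the twist ($\cR_E(x^k)$ versus $\cR_E(x^{-k})$, hence $t^{-k}$ rather than $t^{k}$) and checking that each isomorphism in the chain is natural in $D$, so that representability --- and not merely a levelwise isomorphism of $E$-vector spaces --- is actually established.
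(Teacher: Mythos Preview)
Your proposal is correct and follows exactly the paper's approach: the paper records the identity $F(\pi(\Delta))(D)\cong F(\pi(\Delta,k))(D\otimes_{\cR_E}\cR_E(x^{-k}))$ just before the corollary and then simply says ``We then deduce from Theorem~\ref{thmDel}''. You have spelled out precisely the implicit twisting steps (including the identification $\cR_E(x^{-k})\cong t^{-k}\cR_E$) that the paper leaves to the reader.
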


As in \S~\ref{secInt}, let $\pi_{\infty}(\Delta)$ be the smooth representation of $\GL_2(\Q_p)$ associated to $\Delta$, and for $k\in \Z_{\geq 1}$, let  $\pi_{\alg}(\Delta,k):=\Sym^{k-1} E^2 \otimes_E \pi_{\infty}(\Delta)$ and  $\pi_c(\Delta,k):=\pi(\Delta,-k)\otimes_E (x^k \circ \dett)$. Recall by \cite[Thm. 3.3.1]{BD2}, $F(\pi_{\alg}(\Delta,k))$ is representable by $\cR_E(x^{1-k})/t^{k}$. By Theorem  \ref{thmDel} and $\pi_c(\Delta,k)|_{M(\Q_p)}\cong \pi(\Delta)|_{M(\Q_p)}$,  we see $F(\pi_c(\Delta, k))=F(\pi(\Delta))$ is representable by $\check{\Delta}$.  By \cite[Thm. 4.1.5]{BD2},  we then obtain:
\begin{corollary}\label{corcE}
	There exists a natural $E$-linear map
	\begin{equation}\label{eqCE}\cE:
	\Ext^1_{\GL_2(\Q_p)}\big(\pi_c(\Delta,k), \pi_{\alg}(\Delta,k)\big) \lra \Ext^1_{(\varphi, \Gamma)}\big(\cR_E(x^{1-k})/t^k, \check{\Delta}\big)
	\end{equation}
	satisfying 
	that for  $[\pi]\in \Ext^1_{\GL_2(\Q_p)}\big(\pi_c(\Delta,k), \pi_{\alg}(\Delta,k)\big)$, the functor $F(\pi)$ is representable by the extension of class $\cE([\pi])$. 
\end{corollary}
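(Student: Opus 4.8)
The plan is to deduce the statement as a formal consequence of the representability machinery of \cite[Thm. 4.1.5]{BD2}, whose input is a short exact sequence of admissible locally analytic $\GL_2(\Q_p)$-representations such that $F$ applied to both outer terms is representable. So the first step is to assemble these two inputs. For the sub-object, $\pi_{\alg}(\Delta,k)=\Sym^{k-1}E^2\otimes_E\pi_{\infty}(\Delta)$ is locally algebraic (with $\pi_{\infty}(\Delta)$ supercuspidal, as $\Delta$ is irreducible), and $F(\pi_{\alg}(\Delta,k))$ is representable by $\cR_E(x^{1-k})/t^k$ by \cite[Thm. 3.3.1]{BD2}. For the quotient, one uses $\pi_c(\Delta,k)=\pi(\Delta,-k)\otimes_E(x^k\circ\dett)$ together with the $B(\Q_p)$-equivariant isomorphism $\pi(\Delta,-k)\cong\pi(\Delta)\otimes_E(x^{-k}\otimes 1)$: upon restriction to $M(\Q_p)$ the two twists cancel, so $\pi_c(\Delta,k)|_{M(\Q_p)}\cong\pi(\Delta)|_{M(\Q_p)}$, and since $F(\pi)$ depends only on $\pi|_{M(\Q_p)}$, Theorem \ref{thmDel} gives that $F(\pi_c(\Delta,k))=F(\pi(\Delta))$ is representable by $\check\Delta$.

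With these in hand, given $[\pi]\in\Ext^1_{\GL_2(\Q_p)}(\pi_c(\Delta,k),\pi_{\alg}(\Delta,k))$, represented by an extension $0\ra\pi_{\alg}(\Delta,k)\ra\pi\ra\pi_c(\Delta,k)\ra 0$ of admissible locally analytic representations (so that $\pi$ is again admissible), I would apply the functor $F$. The dual of a short exact sequence of admissible locally analytic representations is again short exact, $\Hom_{(\psi,\Gamma)}(-,D_r)$ is left exact, and the filtered colimit over $I(D)$ is exact, so one obtains an exact sequence of functors $0\ra F(\pi_{\alg}(\Delta,k))\ra F(\pi)\ra F(\pi_c(\Delta,k))$. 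Feeding this, together with the two representability statements above, into \cite[Thm. 4.1.5]{BD2} yields that $F(\pi)$ is itself representable, by a generalized $(\varphi,\Gamma)$-module $D_{\pi}$ sitting in a short exact sequence $0\ra\check\Delta\ra D_{\pi}\ra\cR_E(x^{1-k})/t^k\ra 0$; one then defines $\cE([\pi])$ to be the class of this extension. Well-definedness (independence of the representative of $[\pi]$), naturality, and $E$-linearity of $\cE$ are part of the statement of \cite[Thm. 4.1.5]{BD2}, reflecting the functoriality and additivity of $F$.

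I do not expect a genuine obstacle at this stage: the mathematical content has already been spent --- in Theorem \ref{thmDel} (the first main result of this note) and in \cite[Thm. 3.3.1, Thm. 4.1.5]{BD2} --- and what remains is the formal exactness bookkeeping for $F$ and the elementary observation that $\pi_c(\Delta,k)|_{M(\Q_p)}\cong\pi(\Delta)|_{M(\Q_p)}$. The single point requiring a little care is that $\cR_E(x^{1-k})/t^k$ carries $t$-torsion, so it is only a generalized $(\varphi,\Gamma)$-module; consequently $F$ and all the exactness arguments must be run in the generalized setting, using the colimit definition $F(\pi)(D)=\varinjlim_{(r,f_r,D_r)\in I(D)}\Hom_{(\psi,\Gamma)}(\pi^{\vee},D_r)$ rather than the simpler formula valid for $t$-torsion free $D$. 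This is, however, precisely the framework of \cite[Thm. 4.1.5]{BD2}, so no new difficulty arises.
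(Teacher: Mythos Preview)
Your proposal is correct and follows essentially the same approach as the paper: the paper's entire argument is the sentence preceding the corollary, which cites \cite[Thm.~3.3.1]{BD2} for the representability of $F(\pi_{\alg}(\Delta,k))$, uses Theorem~\ref{thmDel} together with $\pi_c(\Delta,k)|_{M(\Q_p)}\cong\pi(\Delta)|_{M(\Q_p)}$ for the representability of $F(\pi_c(\Delta,k))$, and then invokes \cite[Thm.~4.1.5]{BD2}. Your write-up simply fleshes out these three steps with the supporting exactness and functoriality remarks.
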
 
By \cite[Prop. 5.1.2]{BD2}, there is a natural isomorphism of  $E$-vector spaces: 
\begin{equation}\label{filmaxD}\Ext^1_{(\varphi, \Gamma)}\big(\cR_E(x^{1-k})/t^k, \check{\Delta}\big)\xlongrightarrow{\sim} D_{\dR}(\check{\Delta})
\end{equation}
 satisfying that for each non-split $[D]\in \Ext^1_{(\varphi, \Gamma)}\big(\cR_E(x^{1-k})/t^k, \check{\Delta}\big)$, the map sends the line $E[D]$ to $\cL(D)\hookrightarrow D_{\dR}(D)\cong D_{\dR}(\check{\Delta})$, where $\cL(D)$ is defined in a similar way as in (\ref{HFil}): $\cL(D):=\Fil^{\max} D_{\dR}(D)=\Fil^i D_{\dR}(D)$ for $i=0, \cdots, k-1$ (noting such $D$ has Hodge-Tate weights $(1-k, 1)$). Using the isomorphism $D_{\dR}(\check{\Delta}) \cong D_{\dR}(\Delta)$ (with a shift of the Hodge filtration) induced by $\check{\Delta}\cong \Delta \otimes_{\cR_E} \cR_E(\omega_{\Delta}^{-1})$, the composition of (\ref{eqCE}) and (\ref{filmaxD}) gives a map as in (\ref{EXT1}) (satisfying the properties below (\ref{EXT1})). Conjecture \ref{conjEXT} (in de Rham non-trianguline case) then follows from the following theorem.
\begin{theorem}\label{thmext}
	The map $\cE$ is bijective, in particular, $$\dim_E \Ext^1_{\GL_2(\Q_p)}\big(\pi_c(\Delta,k), \pi_{\alg}(\Delta,k)\big)=2$$ and any non-split extension of $\pi_c(\Delta,k)$ by $\pi_{\alg}(\Delta,k)$ is associated to a $(\varphi, \Gamma)$-module of rank $2$ over $\cR_E$.
\end{theorem}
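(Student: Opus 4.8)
The plan is to show that $\cE$ is injective and surjective separately. For injectivity, suppose $[\pi] \in \Ext^1_{\GL_2(\Q_p)}\big(\pi_c(\Delta,k), \pi_{\alg}(\Delta,k)\big)$ lies in the kernel, i.e. $\cE([\pi])$ is the split extension $\cR_E(x^{1-k})/t^k \oplus \check\Delta$. By Corollary \ref{corcE}, the functor $F(\pi)$ is then representable by this split module. I would compare this with the split extension $\pi_c(\Delta,k)\oplus\pi_{\alg}(\Delta,k)$, whose associated functor is $F(\pi_c(\Delta,k))\oplus F(\pi_{\alg}(\Delta,k))$, also representable by $\check\Delta \oplus \cR_E(x^{1-k})/t^k$. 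Since $F$ is only sensitive to the restriction to $M(\Q_p)$, I expect that having the same representing object, together with the functoriality in \cite[Thm. 4.1.5]{BD2} (which presumably says $\cE$ is compatible with the identification of extensions via $F$), forces $[\pi]$ to be split as an extension of $M(\Q_p)$-representations; one then has to upgrade this to splitting as $\GL_2(\Q_p)$-representations. This last upgrade should follow because $\pi_{\alg}(\Delta,k)$ and $\pi_c(\Delta,k)$ have no common constituents and the $\GL_2(\Q_p)$-structure on a self-extension is rigid once the $M(\Q_p)$- (or even $B(\Q_p)$-) structure is pinned down — essentially an argument that $\Ext^1_{\GL_2(\Q_p)} \hookrightarrow \Ext^1_{M(\Q_p)}$ in this situation, which is plausibly already in \cite{BD2} or reducible to it.

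For surjectivity, I would invoke the lower bound already recorded in the Remark after Conjecture \ref{conjEXT}: by \cite[Thm. 0.6]{Colm18} there is an injection $D_{\dR}(\Delta)\hookrightarrow \Ext^1_{\GL_2(\Q_p)}\big(\pi_c(\Delta,k),\pi_{\alg}(\Delta,k)\big)$, so the source of $\cE$ has dimension $\geq 2$. On the other hand, the target has dimension $\dim_E D_{\dR}(\check\Delta) = 2$ by (\ref{filmaxD}), since $\check\Delta$ is de Rham of rank $2$. So it suffices to show $\cE$ restricted to the $2$-dimensional subspace coming from (\ref{ext01}) is already surjective — equivalently, nonzero on each line. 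Concretely, for each de Rham $(\varphi,\Gamma)$-module $D$ of rank $2$ with associated differential equation $\Delta$ and Hodge-Tate weights $(0,k)$, the class $[\pi(D)]$ maps under $\cE$ to the class of the rank-$2$ extension of $\cR_E(x^{1-k})/t^k$ by $\check\Delta$ whose $\Fil^{\max}$ recovers $\cL(D)$ — this is exactly the content that the composite of $\cE$ with (\ref{filmaxD}) realizes the map (\ref{EXT1}) of Conjecture \ref{conjEXT} on these lines. I would extract this from Corollary \ref{fpk1} together with the fact (from \cite{Colm18}) that $\pi(D) \cong \pi(\Delta,k)/(\pi_{\alg}(\Delta,k)\otimes_E\cL(D))$: the functor $F$ applied to this quotient presentation, using that $F(\pi(\Delta,k))$ is representable by $t^{-k}\check\Delta$ and $F(\pi_{\alg}(\Delta,k))$ by $\cR_E(x^{1-k})/t^k$, yields that $F(\pi(D))$ is represented by the sub-$(\varphi,\Gamma)$-module of $t^{-k}\check\Delta$ cut out by $\cL(D)$, which is precisely a non-split extension of $\cR_E(x^{1-k})/t^k$ by $\check\Delta$ with the prescribed line. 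Hence distinct lines $\cL(D_1)\neq\cL(D_2)$ give $\cE$-images spanning the $2$-dimensional target, so $\cE$ is surjective, and being an injection between $2$-dimensional spaces, it is bijective; the dimension count and the final assertion follow immediately.

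The main obstacle I anticipate is the injectivity step — specifically, the passage from "$F(\pi)$ representable by the split module" back to "$[\pi]$ split". This requires knowing that the functor $F$ (or the induced map $\cE$ on $\Ext^1$) is faithful enough on the relevant category of extensions: a priori $F$ only sees $\pi|_{M(\Q_p)}$, so one must argue that the $M(\Q_p)$-extension class already determines the $\GL_2(\Q_p)$-extension class here. I would try to reduce this to the statement that the natural map $\Ext^1_{\GL_2(\Q_p)}\big(\pi_c(\Delta,k),\pi_{\alg}(\Delta,k)\big)\to\Ext^1_{M(\Q_p)}\big(\pi_c(\Delta,k)|_M,\pi_{\alg}(\Delta,k)|_M\big)$ is injective, using the structure of these representations (e.g. that a $\GL_2(\Q_p)$-equivariant splitting over $M(\Q_p)$ can be averaged or rigidified to a genuine splitting, or that the relevant $\Ext^1_{M(\Q_p)}$ is computed via the same $(\psi,\Gamma)$-module formalism). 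If that injectivity is not directly available, an alternative is to bypass injectivity of $\cE$ entirely: combine surjectivity of $\cE$ with the independently-known lower bound $\dim \geq 2$ and upper bound $\dim \leq 2$ on the source (the upper bound itself presumably coming from a separate computation or from \cite{Colm18}), forcing $\dim = 2$ and hence $\cE$ an isomorphism for dimension reasons. Either way, the remaining steps — the dimension formula and the statement that every non-split self-extension comes from a rank-$2$ $(\varphi,\Gamma)$-module — are then formal consequences via (\ref{filmaxD}) and the description of which $D_{\dR}(\check\Delta)$-lines arise as $\cL(D)$.
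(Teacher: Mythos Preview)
Your surjectivity argument is fine and essentially matches the paper (which packages it as Corollary \ref{corchD}). The gap is exactly where you flagged it: injectivity.

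Neither of your two escape routes works. The fallback ``bypass via dimension count'' assumes an independent upper bound $\dim_E \Ext^1_{\GL_2(\Q_p)}\big(\pi_c(\Delta,k),\pi_{\alg}(\Delta,k)\big)\leq 2$; no such bound is available --- this is precisely what the theorem establishes, and \cite{Colm18} only gives $\geq 2$. The primary route ``$\Ext^1_{\GL_2}\hookrightarrow\Ext^1_{M(\Q_p)}$, presumably in \cite{BD2}'' is also not there: that injectivity is the new content of this paper, and even the weaker step ``same representing object $\Rightarrow$ $M(\Q_p)$-split'' is not automatic, since $F$ is only a functor to $(\varphi,\Gamma)$-modules and a priori need not detect the full $M(\Q_p)$-extension class.

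The paper's argument is concrete and does not pass through any abstract comparison of $\Ext^1$ groups. Assuming $\cE([\pi])=0$ with $[\pi]$ non-split, one has a $(\psi,\Gamma)$-equivariant $\jmath:\pi^{\vee}\to\check\Delta_r\oplus\cR_E^r(x^{1-k})/t^k$, and one sets $M:=\Ker(\pr_1\circ\jmath)\subset\pi^{\vee}$. Since $\iota:\pi_c(\Delta,k)^{\vee}\hookrightarrow\check\Delta_r$ is injective, $M$ injects into $\pi_{\alg}(\Delta,k)^{\vee}$. Two things remain: (i) $M\neq 0$, and (ii) $M$ is $\GL_2(\Q_p)$-stable. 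For (ii) the key observation is that $M$ coincides with the $t^{\infty}$-torsion of $\pi^{\vee}$ (because $\check\Delta_r$ is $t$-torsion-free and $\pi_{\alg}(\Delta,k)^{\vee}$ is killed by $t^k$); this description is intrinsic, so $B(\Q_p)$-stability is immediate, $u_-$-stability follows from the relation $[(u_+)^{n+1},u_-]=n(u_+)^n(h+n)$ in $\text{U}(\gl_2)$, and then the Weyl element $w$ swaps $u_+$ and $u_-$, giving $\GL_2$-stability. For (i), if $M=0$ then $\pi^{\vee}\hookrightarrow\check\Delta_r$ is $t$-torsion-free, and the snake lemma applied to multiplication by $u_+$ forces the connecting map $\pi_{\alg}(\Delta,k)^{\vee}[u_+]\to\pi_c(\Delta,k)^{\vee}/u_+$ to be injective; a Kirillov-model multiplicity count (two copies of $W_E$ in $\pi_c(\Delta,k)[u_+]$, one in $\pi_{\alg}(\Delta,k)^{\vee}[u_+]^{\vee}$) then shows the resulting map to $\check\Delta_r/t$ would have to factor through a rank-one $(\varphi,\Gamma)$-module yet surject onto a rank-two one, a contradiction. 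None of these steps is available from \cite{BD2} or \cite{Colm18} alone; the $t$-torsion characterization of $M$ is the missing idea.
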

By an easy variation of the  proof of Theorem \ref{thmext}, we also obtain the following result on locally analytic $\Ext^1$:
\begin{corollary}\label{corext}
Let $\pi_{\infty}$ be a generic irreducible smooth representation of $\GL_2(\Q_p)$ over $E$, and $W$ be an irreducible algebraic representation of $\GL_2(\Q_p)$ over $E$. Then $\Ext^1_{\GL_2(\Q_p)}(\pi_c(\Delta,k), \pi_{\infty} \otimes_E W)\neq 0$ if and only if $\pi_{\infty}\otimes_E W\cong \pi_{\alg}(\Delta,k)$. 
\end{corollary}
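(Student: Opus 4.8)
\emph{Plan.}
The plan is to derive the statement from Theorem \ref{thmext} together with the rigidity of the locally algebraic vectors of $\pi(D)$. The "if" direction is immediate: Theorem \ref{thmext} gives $\dim_E\Ext^1_{\GL_2(\Q_p)}(\pi_c(\Delta,k),\pi_{\alg}(\Delta,k))=2\neq 0$. For the converse, suppose $\pi_{\infty}\otimes_E W\ncong\pi_{\alg}(\Delta,k)$; after absorbing a power of $\dett$ into the smooth factor we may assume $W\cong\Sym^{a}E^2$ for some $a\geq 0$, and we must show $\Ext^1_{\GL_2(\Q_p)}(\pi_c(\Delta,k),\pi_{\infty}\otimes_E W)=0$.

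\emph{Reduction via the universal-type short exact sequence.}
Fix a de Rham $(\varphi,\Gamma)$-module $D$ of rank $2$ over $\cR_E$ of Hodge--Tate weights $(0,k)$ whose associated $p$-adic differential equation is isomorphic to $\Delta$, and recall the exact sequence $0\to\pi_{\alg}(\Delta,k)\to\pi(D)\to\pi_c(\Delta,k)\to 0$ of \S\ref{secInt}. Applying $\Hom_{\GL_2(\Q_p)}(-,\pi_{\infty}\otimes_E W)$ and using that $\pi_{\alg}(\Delta,k)$ and $\pi_{\infty}\otimes_E W$ are irreducible and non-isomorphic, so that $\Hom_{\GL_2(\Q_p)}(\pi_{\alg}(\Delta,k),\pi_{\infty}\otimes_E W)=0$, the long exact sequence yields an injection
\[
\Ext^1_{\GL_2(\Q_p)}(\pi_c(\Delta,k),\pi_{\infty}\otimes_E W)\ \hooklongrightarrow\ \Ext^1_{\GL_2(\Q_p)}(\pi(D),\pi_{\infty}\otimes_E W).
\]
Hence it suffices to prove that $\Ext^1_{\GL_2(\Q_p)}(\pi(D),\sigma)=0$ for every irreducible locally algebraic $\sigma\ncong\pi_{\alg}(\Delta,k)$.

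\emph{The vanishing of $\Ext^1(\pi(D),\sigma)$.}
I would establish this through the $p$-adic local Langlands correspondence. Given a non-split $0\to\sigma\to\pi'\to\pi(D)\to 0$, one analyses the locally algebraic subrepresentation of $\pi'$, which maps to $\pi(D)^{\mathrm{lalg}}=\pi_{\alg}(\Delta,k)$ (Emerton). If the algebraic part of $\sigma$ is not $\Sym^{k-1}E^2$ then $\sigma$ lies in a different infinitesimal-character block than $\pi(D)$ and the extension is trivial; this reduces one to $\sigma=\Sym^{k-1}E^2\otimes\pi_{\infty}$ with $\pi_{\infty}$ cuspidal and of central character compatible with that of $\pi_{\infty}(\Delta)$. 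For such $\sigma$, passing to universal completions and invoking \cite{CD}, a non-split $\pi'$ would, via Colmez's Montréal functor, produce a non-trivial self-extension of the irreducible de Rham representation $\rho$ with $D_{\rig}(\rho)\cong D$ that is de Rham of Hodge--Tate weights $(0,k)$ and of the cuspidal inertial type attached to $\pi_{\infty}$; comparing this type with that of $\rho$ (equivalently, comparing the Weil--Deligne representation attached to $\pi_{\infty}$ with the one underlying $\Delta$) forces $\pi_{\infty}\cong\pi_{\infty}(\Delta)$, a contradiction. Therefore $\Ext^1_{\GL_2(\Q_p)}(\pi(D),\sigma)=0$, and the corollary follows.

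\emph{Main obstacle, and an alternative route.}
The step I expect to be the real obstacle is precisely the vanishing of $\Ext^1_{\GL_2(\Q_p)}(\pi(D),\sigma)$; the rest is formal. One can also argue in the spirit of "an easy variation of the proof of Theorem \ref{thmext}": by \cite[Thm.~3.3.1]{BD2} the functor $F(\pi_{\infty}\otimes_E W)$ is representable, so the construction of Corollary \ref{corcE} furnishes an $E$-linear map $\cE_W$ from $\Ext^1_{\GL_2(\Q_p)}(\pi_c(\Delta,k),\pi_{\infty}\otimes_E W)$ to an $\Ext^1$-group of $(\varphi,\Gamma)$-modules, injective by the same argument as for $\cE$ in Theorem \ref{thmext}, and the analysis of \cite[Prop.~5.1.2]{BD2} shows the target is non-zero only when $W\cong\Sym^{k-1}E^2$ and $\pi_{\infty}$ has the correct central character. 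However, $F$ only remembers the restriction to the mirabolic subgroup $M(\Q_p)$, on which all cuspidal representations of $\GL_2(\Q_p)$ agree, so even this route still needs the above $\GL_2(\Q_p)$-input to identify the isomorphism class of $\pi_{\infty}$ itself.
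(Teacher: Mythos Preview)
Your reduction to $W\cong\Sym^{k-1}E^2$ via central and infinitesimal characters matches the paper. However, your main route --- reducing to $\Ext^1_{\GL_2(\Q_p)}(\pi(D),\sigma)=0$ and then invoking universal completions and the Montreal functor --- has a genuine gap. Universal completion is not an exact functor, so the locally analytic extension $0\to\sigma\to\pi'\to\pi(D)\to 0$ need not induce an extension of Banach representations; and even granting that, applying Colmez's functor to an extension by a locally algebraic $\sigma$ of a \emph{different} cuspidal type does not produce a self-extension of $\rho$, so the inertial-type comparison you sketch never gets off the ground.

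Your alternative route is the paper's approach, and you correctly identify the obstacle: the injective map $\cE'$ lands in $\Ext^1_{(\varphi,\Gamma)}(\cR_E(x^{1-k})/t^k,\check{\Delta})$, which is two-dimensional and does not see $\pi_\infty$. The step you are missing is the following. Given a non-split $[\pi]$ with $\pi_\infty\ncong\pi_\infty(\Delta)$, set $[\check D]:=\cE'([\pi])$ and, using the \emph{bijectivity} of $\cE$ from Theorem~\ref{thmext}, put $[\pi(D)]:=\cE^{-1}([\check D])$. Form the pullback $\tilde\pi$ of $\pi\oplus\pi(D)\twoheadrightarrow\pi_c(\Delta,k)^{\oplus2}$ along the diagonal; this is an extension of $\pi_c(\Delta,k)$ by $(\pi_\infty\otimes\Sym^{k-1}E^2)\oplus\pi_{\alg}(\Delta,k)$ with $F(\tilde\pi)$ representable by $\check D\oplus\cR_E(x^{1-k})/t^k$. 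Now rerun the proof of Theorem~\ref{thmext} with $(\pi_c(\Delta,k),\pi_{\alg}(\Delta,k))$ replaced by $(\pi(D),\pi_\infty\otimes\Sym^{k-1}E^2)$: the kernel of $\tilde\pi^\vee\to\check D_r$ is $\GL_2(\Q_p)$-stable (same $t$-torsion characterisation) and isomorphic to $(\pi_\infty\otimes\Sym^{k-1}E^2)^\vee$, so $\tilde\pi\cong\pi(D)\oplus(\pi_\infty\otimes\Sym^{k-1}E^2)$ and hence $\pi\cong\tilde\pi/\pi_{\alg}(\Delta,k)$ splits --- contradiction. This pairing of $\pi$ with $\pi(D)$ is precisely the ``$\GL_2(\Q_p)$-input'' you said was still needed; no appeal to completions or the Montreal functor is required.
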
 
\section{Proofs}
We keep the notation in \S~\ref{secMain}. 
Let $D_r$ be a generalized $(\varphi, \Gamma)$-module over $\cR_E^r$ (cf. \cite[\S~2.2]{BD2}). We call $D_r$ is good if $D_r\cong (\cR_E^r)^{m_1} \oplus \oplus_{i=1}^{m_2} \cR_E^r/t^{s_i}$ as $\cR_E^r$-module for some integers $m_1\geq 0$, $m_2\geq 0$, and $s_i\geq 1$. And if so, we call $m=m_1+m_2$ the rank of $D_r$. Note  for a general $D_r$, $D_r \otimes_{\cR_E^r} \cR_E^{r'}$ is good for $r'\gg r$ (cf. \cite[(22)]{BD2}). 
We begin with a key lemma.
\begin{lemma}\label{keyL}
	Let $D_r$ be a good generalized $(\varphi, \Gamma)$-module over $\cR_E^r$, and  $f\in \Hom_{(\psi, \Gamma)}(\pi(\Delta)^{\vee}, D_r)$. Suppose the induced morphism 
	\begin{equation*}
	\pi(\Delta)^{\vee} \otimes_{\cR_E^+} \cR_E^r \lra D_r
	\end{equation*}
	has dense image.
	Then the rank of $D_r$ is at most $2$.
\end{lemma}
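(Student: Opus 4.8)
\emph{Proof proposal.} The plan is to reduce the lemma, via Colmez's description of $\pi(\Delta)$ in \cite{Colm18}, to the statement that $\pi(\Delta)^\vee$ differs from $\check\Delta$ only by a $(\psi,\Gamma)$-module that is ``invisible'' to the functor $F$, and then to convert the density hypothesis into an honest bound on the $\cR_E^r$-module $D_r$.

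I would first collect some soft module-theoretic facts. Since $\cR_E^r$ is Noetherian, a good $D_r$ is finitely generated over $\cR_E^r$ and all its $\cR_E^r$-submodules are closed; hence ``dense image'' in the hypothesis is the same as ``surjective''. Because $\varphi(t)=pt$ and $\gamma(t)\in t\,(\cR_E^r)^{\times}$ for $\gamma\in\Gamma$, the $t$-power-torsion submodule $T_r\subseteq D_r$ is a sub-$(\varphi,\Gamma)$-module, $T_r\cong\bigoplus_{i=1}^{m_2}\cR_E^r/t^{s_i}$ is finite-dimensional over $E$, and $\bar D_r:=D_r/T_r$ is a $t$-torsion-free good $(\varphi,\Gamma)$-module. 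Moreover, localizing at a point of the finite zero locus $V(t)$, where $\cR_E^r$ becomes a discrete valuation ring, one sees that $\rk$ is sub-additive in short exact sequences of good $(\varphi,\Gamma)$-modules and that any good quotient of a free $\cR_E^r$-module of rank $a$ has $\rk\le a$ (Nakayama plus a count of minimal numbers of generators). Finally, enlarging $r$ changes neither goodness, nor $\rk$, nor the value of $F$ on the objects at hand. So it suffices, after possibly enlarging $r$, to factor $f$ through a free $\cR_E^r$-module of rank $\le 2$, or more generally through an object of $\rk\le 2$.

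The substantive input is from \cite{Colm18}: restricting the $M(\Q_p)$-action on $\pi(\Delta)$ and dualizing produces a $(\psi,\Gamma)$-equivariant, continuous, $\cR_E^+$-linear short exact sequence
\[ 0 \longrightarrow K \longrightarrow \pi(\Delta)^\vee \xrightarrow{\ \mathrm{res}\ } \check\Delta \longrightarrow 0 \]
(onto $\check\Delta=\check\Delta\boxtimes\Z_p$ with its standard $(\psi,\Gamma)$-structure), where $\varinjlim_r\Hom_{(\psi,\Gamma)}(K,D')=0$ for every good $(\varphi,\Gamma)$-module $D'$; here the constancy of the Hodge--Tate weight forces the ``locally algebraic'' part of $\pi(\Delta)$ entering $K$ to be smooth, and this vanishing is extracted from Colmez's analysis of $\mathrm{res}$ together with the known representability of $F$ on locally algebraic representations (\cite[Thm.~3.3.1]{BD2}). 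Granting this, the composite $K\hookrightarrow\pi(\Delta)^\vee\xrightarrow{f}D_r$ vanishes after enlarging $r$, so $f$ factors through $\mathrm{res}$, i.e.\ through a good $\cR_E^r$-model of $\check\Delta$; since the latter is free of rank $2$ as an $\cR_E^r$-module and $f$ has dense (hence surjective) image, the generator count above yields $\rk D_r\le 2$.

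The main obstacle is precisely this middle step — pinning down, from Colmez's construction, the exact $(\psi,\Gamma)$-$\cR_E^+$-module structure of $\pi(\Delta)^\vee$, namely that it is $\check\Delta$ up to an $F$-negligible correction term $K$, and proving the vanishing of $F$ on $K$ using \cite{Colm18} and the locally algebraic case. Once that is available, the passage from ``$f$ has dense image'' to ``$\rk D_r\le 2$'' is the elementary book-keeping sketched above: Noetherianity of $\cR_E^r$ makes dense images surjective, $T_r$ is finite-dimensional over $E$, $\rk$ is sub-additive, and everything is stable under enlarging $r$.
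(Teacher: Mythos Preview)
Your proposal has a genuine gap: it is circular. The claim that $\pi(\Delta)^\vee$ sits in a $(\psi,\Gamma)$-exact sequence $0\to K\to \pi(\Delta)^\vee\to\check\Delta\to 0$ with $K$ satisfying $\varinjlim_r\Hom_{(\psi,\Gamma)}(K,D')=0$ for all good $D'$ is precisely the statement that every $(\psi,\Gamma)$-map $\pi(\Delta)^\vee\to D_r$ factors through $\check\Delta_r$; that is exactly Theorem~\ref{thmDel} (representability of $F(\pi(\Delta))$ by $\check\Delta$), and the paper proves Theorem~\ref{thmDel} \emph{using} Lemma~\ref{keyL}. You have simply restated the target theorem as the ``main obstacle'' and called it an input from \cite{Colm18}, but Colmez's paper does not supply this; extracting it is the whole point. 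Moreover, the shape of the sequence is wrong: by \cite[Prop.~2.20]{Colm18} the restriction map $\iota:\pi(\Delta)^\vee\to\check\Delta_r$ is \emph{injective}, so your $K$ is zero and there is nothing to quotient by --- the difficulty is not a kernel but showing that an arbitrary $f$ factors through this particular injection.

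There are also factual slips in the ``soft'' part. The ring $\cR_E^r$ (functions on a half-open annulus) is B\'ezout but \emph{not} Noetherian, so you cannot conclude that submodules of a good $D_r$ are closed, nor that dense image implies surjective; the paper is careful to work with closures and only removes them \emph{a posteriori}. And $\cR_E^r/t^s$ is not finite-dimensional over $E$: $t=\log(1+X)$ has infinitely many zeros in $p^{-1/r}\le|X|<1$.

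The paper's argument avoids all of this by reducing modulo $t$. Twisting to $\pi(\Delta,1)$, it identifies $\pi(\Delta,1)^\vee/t\cong(\pi(\Delta,1)[u_+])^\vee$ via \cite[Cor.~9.3]{DLB}, and then uses Colmez's computation $\pi(\Delta,1)[u_+]\cong\pi_\infty(\Delta)^{\oplus 2}$ together with the already-known representability of $F$ on locally algebraic representations (\cite[Thm.~3.3.1]{BD2}) to factor the induced map through $(\cR_E^{r'}/t)^{\oplus 2}$. A dense-image map from a rank-$2$ module over the B\'ezout ring $\cR_E^{r'}$ then forces $\rk(D_r/t)\le 2$, and $\rk D_r=\rk(D_r/t)$. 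The key idea you are missing is this passage to $u_+$-invariants, which replaces the full (unknown) structure of $\pi(\Delta)^\vee$ by the locally algebraic piece where the bookkeeping is already done.
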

\begin{proof}
	As $\pi(\Delta,1)\cong \pi(\Delta) \otimes (x \otimes 1)$ as $B(\Q_p)$-representation, we have $$\Hom_{(\psi, \Gamma)}\big(\pi(\Delta,1)^{\vee}, D_r \otimes_{\cR_E^r} \cR_E^r(x^{-1})\big)=\Hom_{(\psi, \Gamma)}(\pi(\Delta)^{\vee}, D_r).$$ In particular, $f$ induces a  morphism
	\begin{equation*}
	\pi(\Delta,1)^{\vee} \otimes_{\cR_E^+} \cR_E^r \longrightarrow D_r \otimes_{\cR_E^r} \cR_E^r(x^{-1})=:D_r(x^{-1}),
	\end{equation*}
	which has dense image.
	This morphism further induces a  morphism with dense image:
	\begin{equation*}
\big(\pi(\Delta,1)^{\vee}/u_+ \pi(\Delta,1)\big) \otimes_{\cR_E^+} \cR_E^r\cong 	\pi(\Delta,1)^{\vee}/t \otimes_{\cR_E^+} \cR_E^r \longrightarrow D_r(x^{-1})/t.
	\end{equation*}
Using \cite[Cor. 9.3]{DLB}, we see  $\pi(\Delta,1)^{\vee}/u_+ \pi(\Delta,1)^{\vee}\cong (\pi(\Delta,1)[u_+])^{\vee}$, where $(-)[u_+]$ denotes the subspace annihilated by $u_+$.  By \cite[Lemma 3.24, Thm. 3.31]{Colm18}, $\pi(\Delta,1)[u_+]\subset \pi(\Delta,1)$ is stabilized by $\GL_2(\Q_p)$, and is isomorphic to $\pi_{\alg}(\Delta,1)^{\oplus 2}\cong \pi_{\infty}(\Delta)^{\oplus 2}$ as $\GL_2(\Q_p)$-representation. By \cite[Thm. 3.3.1]{BD2}, the induced  morphism $\pi(\Delta,1)^{\vee}/t \ra D_r(x^{-1})/t \otimes_{\cR_E^r} \cR_E^{r'}$ for $r' \gg r$ factors through $(\cR_E^{r'}/t)^{\oplus 2}$. For such $r'$, we obtain thus a (continuous $\cR_E^{r'}$-linear) morphism with dense image $(\cR_E^{r'}/t)^{\oplus 2} \ra D_r(x^{-1})/t \otimes_{\cR_E^r} \cR_E^{r'}$. As $\cR_E^{r'}$ is B\'ezout (see for example \cite[Prop. 4.12]{Berger}), it is not difficult to see the rank of $D_r(x^{-1})/t \otimes_{\cR_E^r} \cR_E^{r'}$ is at most $2$ ($=$ the rank of $(\cR_E^{r'}/t)^{\oplus 2}$).  Since the rank of $D_r(x^{-1})/t \otimes_{\cR_E^r} \cR_E^{r'}$ over $\cR_E^{r'}/t$ is the same as the rank of $D_r$,  the lemma follows.
\end{proof}
\begin{proof}[Proof of Theorem \ref{thmDel}]
	Recall (e.g. see \cite[\S~2.1]{Colm18}) $\Delta$ extends uniquely to a $\GL_2(\Q_p)$-sheaf over $\bP^1(\Q_p)$ of central character $\omega_{\Delta}$,  and the space $\Delta \boxtimes \bP^1$ of global sections  sit in a $\GL_2(\Q_p)$-equivariant exact sequence
	\begin{equation*}
	0 \ra \pi(\Delta)^{\vee} \otimes_E (\omega_{\Delta} \circ \dett) \ra \Delta \boxtimes \bP^1 \ra \pi(\Delta) \ra 0.
	\end{equation*}
The space of sections of the $\GL_2(\Q_p)$-sheaf on the open set $\Z_p\hookrightarrow \Q_p \hookrightarrow \bP^1(\Q_p)$ is isomorphic to $\Delta$, and the composition $\iota: \pi(\Delta)^{\vee}\otimes_E (\omega_{\Delta} \circ \dett) \hookrightarrow \Delta \boxtimes \bP^1 \xlongrightarrow{\Res_{\Z_p}} \Delta$ is $\cR_E^+$-linear, continuous and $(\psi, \Gamma)$-equivariant. By the same argument as in Step 1 of the proof of \cite[Thm. 5.4.2]{BD2} (noting since $\Delta$ is irreducible, $\Delta$ is \'etale up to twist by characters),  $\iota$ has image in $\Delta_r$ for $r$ sufficiently large (where $\Delta_r$ is a $(\varphi, \Gamma)$-module over $\cR_E^r$ such that $\Delta_r \otimes_{\cR_E^r} \cR_E \cong \Delta$), and induces a surjective morphism $\iota: \pi(\Delta)^{\vee} \otimes_{\cR_E^+} \cR_E^r \twoheadrightarrow \Delta_r$. 
	
	Let $D$ be a generalized $(\varphi, \Gamma)$-module over $\cR_E$, and let $\mu \in F\big(\pi(\Delta) \otimes_E (\omega_{\Delta}^{-1}\circ \dett)\big)(D)$. Let $(r,D_r, f_r)\in I(D)$ such that $\mu \in \Hom_{(\psi, \Gamma)}\big(\pi(\Delta) \otimes_E (\omega_{\Delta}^{-1}\circ \dett), D_r\big)$. It is sufficient to show that, enlarging $r$ if needed, $\mu$ factors through $\iota$. Indeed, if so,  the following map induced by $\iota$ (see \cite[Lemma 2.2.3 (iii), Remark 2.3.1 (iv)]{BD2}):
	\begin{equation*}
	\Hom_{(\varphi,\Gamma)}(\Delta, D) \lra F\big(\pi(\Delta) \otimes_E (\omega_{\Delta}^{-1} \circ \dett)\big)(D)
	\end{equation*}
	is surjective hence bijective (as $\iota$ is surjective after tensoring the source by $\cR_E^r$). The theorem then follows using $\check{\Delta}\cong \Delta \otimes_{\cR_E} \cR_E(\omega_{\Delta}^{-1})$. 
	
	Replacing $r$ by $r'\gg r$ (and $D_r$ by $D_r \otimes_{\cR_E^r} \cR_E^{r'}$), we  can and do assume that $\iota$ factors through $\Delta_r$, and $D_r$ is good. 	Consider 
	\begin{equation*}
\tilde{\mu}:	\pi(\Delta)^{\vee} \otimes_E (\omega_{\Delta} \circ \dett) \xlongrightarrow{(\iota, \mu)} \Delta_r \oplus D_r.
	\end{equation*}
	Denote by $M_r$ the closed $\cR_E^r$-submodule of $\Delta_r \oplus D_r$ generated by $\Ima(\tilde{\mu})$. As $\tilde{\mu}$ is $(\psi,\Gamma)$-equivariant, we see $M_r\subset \Delta_r \oplus D_r$ is stabilized by $\psi$ and  $\Gamma$. By the discussion in the end of \cite[\S~2.2]{BD2} (see in particular \cite[(24)]{BD2}), $M_r$ is stabilized by $\varphi$ and $\Gamma$, hence is a generalized $(\varphi, \Gamma)$-module over $\cR_E^r$. For $r'\geq r$, $M_{r'}:=M_r \otimes_{\cR_E^r} \cR_E^{r'}$ is the closed $\cR_E^{r'}$-submodule of $\Delta_{r'} \oplus D_{r'}:=(\Delta_r \otimes_{\cR_E^r} \cR_E^{r'}) \oplus (D_r \otimes_{\cR_E^r} \cR_E^{r'})$ generated by the image of $\tilde{\mu}: \pi(\Delta)^{\vee} \otimes_E (\omega_{\Delta} \circ \dett) \xlongrightarrow{(\iota, \mu)} \Delta_{r'} \oplus D_{r'}$. Let $r'$ be sufficiently large such that $M_{r'}$ is good. Then by Lemma \ref{keyL}, the rank $M_{r'}$ is at most $2$. 
	
The following composition 
\begin{equation*}(\pi(\Delta)^{\vee} \otimes_E (\omega_{\Delta} \circ \dett))\otimes_{\cR_E^+} \cR_E^{r'} \ra M_{r'} \hookrightarrow \Delta_{r'} \oplus D_{r'} \xrightarrow{\pr_1} \Delta_{r'}
\end{equation*} is equal to $\iota$ hence surjective. We see the induced morphism $\kappa: M_{r'}\ra  \Delta_{r'}$ is surjective. It is clear that $\kappa$ is continuous $\cR_E^{r'}$-linear and $(\psi, \Gamma)$-equivariant. By \cite[Remark 2.3.1 (iv)]{BD2},  we see $\kappa$ is $(\varphi, \Gamma)$-equivariant (hence is a morphism of generalized $(\varphi, \Gamma)$-modules).   Since the rank of $M_{r'}$ is at most the rank of $\Delta_{r'}$, and $\Delta_{r'}$ has no $t$-torsion, we deduce using \cite[Prop. 4.12]{Berger} that  $\kappa: M_{r'}\xrightarrow{\sim} \Delta_{r'}$ (as $(\varphi,\Gamma)$-module over $\cR_E^{r'}$) and $M_{r'}$ is actually the $\cR_E^{r'}$-submodule of $\Delta_{r'}\oplus D_{r'}$ generated by $\Ima(\tilde{\mu})$ (i.e. there is no need to take closure).  Thus $\tilde{\mu}=\kappa^{-1} \circ \iota$ and $\mu=\pr_2\circ \tilde{\mu}=(\pr_2\circ \kappa^{-1}) \circ \iota$, in particular, $\mu$ factors though $\pi(\Delta)^{\vee} \otimes_E (\omega_{\Delta}\circ \dett) \xrightarrow{\iota} \Delta_{r'} \ra D_{r'}$. This concludes the proof. 
\end{proof}
\begin{remark}
	The proof of Lemma \ref{keyL} (hence of Theorem \ref{thmDel}) is crucially based on the fact that $\pi(\Delta)[u_+]|_{M(\Q_p)}$ is isomorphic, up to finite dimensional subquotients and up to twist by characters, to \textbf{two} copies of (the $E$-model of) the standard Kirillov model of generic irreducible smooth representations of $\GL_2(\Q_p)$ (i.e. $W_E$ in the proof of Theorem \ref{thmext} below). One may expect this holds in general (see \cite[Remark 2.14]{Colm18}). If so, one may deduce by the same argument that $F(\pi(D))$ is representable by $\check{D}$ for any $(\varphi, \Gamma)$-module $D$ free of rank $2$ over $\cR_E$. 
\end{remark}
 For any non-split $[D]\in \Ext^1_{(\varphi, \Gamma)}(\cR_E/t^k, t^k\Delta)$, one can associate (e.g. see \cite[Thm. 0.6 (iii)]{Colm18}) a locally analytic $\GL_2(\Q_p)$-representation $\pi(D)$ that is isomorphic to  an extension of $\pi_c(\Delta,k)$ by $\pi_{\alg}(\Delta,k)$. By \cite[Thm. 5.4.2 (ii)]{BD2}, we have:
\begin{corollary}\label{corchD}
	The functor $F(\pi(D))$ is representable by $\check{D}$. 
\end{corollary}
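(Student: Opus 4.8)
The plan is to combine Corollary \ref{corcE} with the argument of \cite[Thm.~5.4.2~(ii)]{BD2}, the only new ingredient being Theorem \ref{thmDel} in place of its trianguline counterpart \cite[Thm.~5.4.2~(i)]{BD2}. By \cite[Thm.~0.6~(iii)]{Colm18}, the representation $\pi(D)$ attached to a non-split class $[D]\in\Ext^1_{(\varphi,\Gamma)}(\cR_E/t^k,t^k\Delta)$ fits into a short exact sequence
\begin{equation*}
0\ra\pi_{\alg}(\Delta,k)\ra\pi(D)\ra\pi_c(\Delta,k)\ra 0
\end{equation*}
of locally analytic $\GL_2(\Q_p)$-representations, and is, up to isomorphism, the quotient $\pi(\Delta,k)/(\pi_{\alg}(\Delta,k)\otimes_E\cL(D))$ of the universal extension (\ref{univ}). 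In particular $[\pi(D)]$ defines a class in $\Ext^1_{\GL_2(\Q_p)}(\pi_c(\Delta,k),\pi_{\alg}(\Delta,k))$, so by Corollary \ref{corcE} the functor $F(\pi(D))$ is representable by the generalized $(\varphi,\Gamma)$-module underlying the extension of class $\cE([\pi(D)])\in\Ext^1_{(\varphi,\Gamma)}(\cR_E(x^{1-k})/t^k,\check\Delta)$.

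It then remains to identify that extension with $\check D$. First I would invoke \cite[Prop.~5.1.2]{BD2}, i.e.\ the isomorphism (\ref{filmaxD}): a class in $\Ext^1_{(\varphi,\Gamma)}(\cR_E(x^{1-k})/t^k,\check\Delta)$ is determined by its image in $D_{\dR}(\check\Delta)$, so it suffices to show that $\cE([\pi(D)])$ and the class of $\check D$ have the same Hodge line. This is precisely the computation performed in the proof of \cite[Thm.~5.4.2~(ii)]{BD2}: one traces, through the cup-product description of $\cE$ supplied by \cite[Thm.~4.1.5]{BD2}, Colmez's compatibility \cite[Thm.~0.6~(iii), Thm.~0.8]{Colm18} between the assignment $[D]\mapsto\pi(D)$ and the Hodge filtration of $D$ --- concretely the relation $\pi(D)\cong\pi(\Delta,k)/(\pi_{\alg}(\Delta,k)\otimes_E\cL(D))$ recalled above --- together with the Cartier-dual twist $\check\Delta\cong\Delta\otimes_{\cR_E}\cR_E(\omega_{\Delta}^{-1})$. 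None of these inputs uses that $\Delta$ is trianguline; triangulinity entered \cite[Thm.~5.4.2]{BD2} only through its part (i), which is now available unconditionally as Theorem \ref{thmDel} (equivalently Corollary \ref{fpk1}). So the argument of \cite[Thm.~5.4.2~(ii)]{BD2} applies verbatim and gives the claim.

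The main --- indeed essentially the only --- obstacle is this last matching of Hodge lines: one must see that the line in $D_{\dR}(\check\Delta)$ attached to $\pi(D)$ via $\cE$ and (\ref{filmaxD}) is precisely $\Fil^{\max}D_{\dR}(\check D)$, and not merely some line depending on $\cL(D)$ in an uncontrolled way. This is the content of (the proof of) \cite[Thm.~5.4.2~(ii)]{BD2}; its verification rests only on the representability of $F(\pi(\Delta))$ by $\check\Delta$ and on Colmez's compatibilities between $[D]\mapsto\pi(D)$ and the Hodge filtration of $D$, both of which, as explained above, are available in the present de Rham non-trianguline generality. Finally, equality of Hodge lines does suffice: by the injectivity of (\ref{filmaxD}) together with the injectivity of the assignment $\cL(D)\mapsto[\pi(D)]$ (recorded in the remark after Conjecture \ref{conjEXT}), one concludes that the two extensions of $\cR_E(x^{1-k})/t^k$ by $\check\Delta$ are isomorphic, whence $F(\pi(D))$ is representable by $\check D$.
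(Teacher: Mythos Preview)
Your proposal is correct and takes essentially the same approach as the paper: both reduce the claim to \cite[Thm.~5.4.2~(ii)]{BD2}, observing that the only place triangulinity enters that argument is through part~(i) of \emph{loc.~cit.}, which is now supplied in the non-trianguline case by Theorem~\ref{thmDel} (equivalently Corollary~\ref{fpk1}). The paper's proof is in fact just the one-line citation ``By \cite[Thm.~5.4.2~(ii)]{BD2}''; your detour through Corollary~\ref{corcE} and the Hodge-line comparison (\ref{filmaxD}) is not needed, since \cite[Thm.~5.4.2~(ii)]{BD2} establishes the representability of $F(\pi(D))$ by $\check{D}$ directly rather than by first producing an unidentified extension class and then matching lines---but your framing is not wrong, and you correctly identify that the substantive computation is the one carried out in \cite[Thm.~5.4.2~(ii)]{BD2}.
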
 
\begin{proof}[Proof of Theorem \ref{thmext}]
	By Corollary \ref{corchD}, the map (\ref{eqCE}) is surjective. We prove it is injective. Let $[\pi]\in \Ext^1_{\GL_2(\Q_p)}(\pi_c(\Delta,k), \pi_{\alg}(\Delta,k))$ be non-split. Suppose $\cE([\pi])=0$, i.e. $F(\pi)$ is representable by $\check{\Delta} \oplus \cR_E(x^{1-k})/t^k$. We will use this property to construct a $\GL_2(\Q_p)$-equivariant subspace $M\subset \pi^{\vee}$ giving a splitting of $\pi^{\vee} \ra \pi_{\alg}(\Delta,k)^{\vee}$ (which leads to a contradiction). The proof is organized as follows: we first construct $M$ as an $\cR_E^+$-submodule of $\pi^{\vee}$ preserved by $\psi$ and $\Gamma$, then we show $M\neq 0$, and  $M$ is stabilized by $\GL_2(\Q_p)$ and isomorphic to $\pi_{\alg}(\Delta,k)^{\vee}$.
	
	 For $r\in \Q_{>0}$ sufficiently large, we have a natural $\cR_E^+$-linear continuous $(\psi, \Gamma)$-equivariant morphism
	$\jmath: \pi^{\vee} \ra \check{\Delta}_r \oplus \cR_E^r(x^{1-k})/t^k$ 
	such that the induced morphism
	\begin{equation}\label{jten} 
	\pi^{\vee} \otimes_{\cR_E} \cR_E^r \lra \check{\Delta}_r \oplus \cR_E^r(x^{1-k})/t^k
	\end{equation} is surjective.  Indeed, we have by \cite[Thm. 4.1.5]{BD2} a natural commutative diagram:
	\begin{equation}\label{commd}
	\begin{CD}
	0 @>>> \pi_c(\Delta,k)^{\vee}\otimes_{\cR_E^+} \cR_E^r @>>> \pi^{\vee}\otimes_{\cR_E^+} \cR_E^r @>>> \pi_{\alg}(\Delta,k)^{\vee}  \otimes_{\cR_E^+} \cR_E^r @>>> 0 \\
	@. @VVV @V (\ref{jten}) VV @VVV @. \\
	0 @>>> \check{\Delta}_r @>>>  \check{\Delta}_r \oplus \cR_E^r(x^{1-k})/t^k @>>> \cR_E^r(x^{1-k})/t^k @>>> 0
	\end{CD}.
	\end{equation}
	The left vertical map is surjective as it is induced from:
	\begin{equation*}
	\iota: \pi_c(\Delta,k)^{\vee} \cong \pi(\Delta)^{\vee} \xlongrightarrow{\iota} \check{\Delta}_r
	\end{equation*}
where the first isomorphism is $M(\Q_p)$-equivariant, and the second map is given as in the proof of Theorem \ref{thmDel}. By \cite[Lemma 3.3.5 (ii)]{BD2} and its proof, the right vertical map is also surjective, hence so is the middle vertical map. 
	Let $M:=\Ker(\pr_1 \circ \jmath: \pi^{\vee} \ra \check{\Delta}_r)$.  As the composition $\pi_c(\Delta,k)^{\vee} \hookrightarrow \pi^{\vee} \xrightarrow{\pr_1\circ \jmath} \check{\Delta}_r$ is equal to $\iota$  and hence is injective by \cite[Prop. 2.20]{Colm18}, we deduce $M\cap \pi_c(\Delta,k)^{\vee}=0$. So the following composition (continuous $\cR_E^+$-linear and $(\psi, \Gamma)$-equivariant)
	\begin{equation*}
	M \hooklongrightarrow \pi^{\vee} \twoheadrightarrow \pi_{\alg}(\Delta,k)^{\vee}
	\end{equation*}
	is injective. 
	
	\noindent (1) We first prove $M\neq 0$. Suppose $M=0$ hence $\pi^{\vee} \hookrightarrow \check{\Delta}_r$. As $\check{\Delta}_r$ is $t$-torsion free, so is $\pi^{\vee}$. From the commutative diagram (recalling $u_+=t$)
	\begin{equation*}
		\begin{CD} 0 @>>> \pi_c(\Delta,k)^{\vee} @>>> \pi^{\vee} @>>> \pi_{\alg}(\Delta,k)^{\vee} @>>> 0 \\
			@. @V u_+ VV @V u_+ VV @V u_+ VV @. \\
		0@>>>	\pi_c(\Delta,k)^{\vee} @>>> \pi^{\vee} @>>> \pi_{\alg}(\Delta,k)^{\vee} @>>> 0 
			\end{CD}
	\end{equation*}
we deduce an exact sequence (consisting of continuous maps)
\begin{equation}\label{snake}
	0 \ra \pi_{\alg}(\Delta,k)^{\vee}[u_+] \xrightarrow{\delta} \pi_c(\Delta,k)^{\vee}/u_+ \pi_c(\Delta,k)^{\vee} \ra \pi^{\vee}/u_+ \pi^{\vee} \ra \pi_{\alg}(\Delta,k)^{\vee}/u_+ \pi_{\alg}(\Delta,k)^{\vee} \ra 0.
\end{equation}
Roughly speaking, we will show a contradiction by considering the multiplicities of the Kirillov model in the dual of each term of (\ref{snake}).
By the $\GL_2(\Q_p)$-equivariant isomorphism $\pi_{\alg}(\Delta,k)^{\vee}\cong \pi_{\infty}(\Delta)^{\vee} \otimes_E (\Sym^{k-1} E^2)^{\vee}$, we get a $B(\Q_p)$-equivariant isomorphism (of reflexive Fr\'echet $E$-spaces): 
\begin{equation}\label{pialgdual}\pi_{\alg}(\Delta,k)^{\vee}[u_+]\cong \pi_{\infty}(\Delta)^{\vee} \otimes_E (1\otimes x^{1-k}).
\end{equation}  Using the isomorphisms of $B(\Q_p)$-representations 
\begin{equation*}
\pi_c(\Delta, k)\cong \pi(\Delta,1) \otimes_E (x^{-1}\otimes x^{k-1}), \ \pi(\Delta,1)[u_+]\cong \pi_{\infty}(\Delta)^{\oplus 2}, 
\end{equation*}
we deduce a 
$B(\Q_p)$-equivariant isomorphism of reflexive Fr\'echet $E$-spaces (similarly as in  the proof of Lemma \ref{keyL}, the first isomorphism following from \cite[Cor. 9.3]{DLB}):
\begin{equation}\label{picdual}
	\pi_c(\Delta,k)^{\vee}/u_+ \pi_c(\Delta,k)^{\vee}\cong \pi_c(\Delta,k)[u_+]^{\vee} \cong \big(\pi_{\infty}(\Delta)^{\vee} \otimes_E (x \otimes x^{-k})\big)^{\oplus 2}.
\end{equation} 
By similar arguments of \cite[Lemma 2.1.5]{BD2}, the injection $\delta$ induces a continuous map of spaces of compact type with dense image $\delta^{\vee}:\pi_{c}(\Delta,k)[u_+] \ra \pi_{\alg}(\Delta,k)^{\vee}[u_+]^{\vee}\cong \pi_{\infty}(\Delta) \otimes_E (1 \otimes x^{k-1})$. As $\pi_{\infty}(\Delta)$ is equipped with the finest locally convex topology, $\delta^{\vee}$ is surjective (see for example \cite[\S~5.C]{NFA}). We have hence an exact sequence of spaces of compact type (all equipped with the finest locally convex topology):
\begin{equation}\label{fvee}
	0 \ra \Ker (\delta^{\vee}) \ra \pi_{c}(\Delta,k)[u_+] \ra \pi_{\alg}(\Delta,k)^{\vee}[u_+]^{\vee} \ra 0.
\end{equation}One directly checks (by diagram chasing) that for $b\in B(\Q_p)$ and $v\in  \pi_{\alg}(\Delta,k)^{\vee}[u_+]$, $\delta(bv)=(x^{-1}\otimes x)(b)b(\delta(v))$. We see $\Ker (\delta^{\vee})$ is stabilized by $B(\Q_p)$, and the exact sequence in (\ref{fvee}) becomes $B(\Q_p)$-equivariant if we twist $\pi_{\alg}(\Delta,k)^{\vee}[u_+]^{\vee}$ by the character $x^{-1} \otimes x$ of $B(\Q_p)$.

Let $\eta: \Q_p \ra \bC_p$ be a non-trivial locally constant (additive) character. Let $W:=\cC^{\infty}_c(\Q_p^{\times},\bC_p)$ be the space of locally constant $\bC_p$-valued functions on $\Q_p^{\times}$, which is equipped with a natural $M(\Q_p)$-action given by 
\begin{equation*}
	\bigg(\begin{pmatrix}
		a & 0 \\ 0 & 1
	\end{pmatrix} f\bigg)(x)=f(ax), \ \bigg(\begin{pmatrix}
	1 & b \\ 0 & 1
\end{pmatrix} f\bigg)(x)=\eta(bx)f(x).
\end{equation*} 
Recall $W$ is irreducible and admits an $E$-model $W_E$, that is unique up to scalars in $\bC_p^{\times}$ (see \cite[Lemma 3.3.2]{BD2}). By  classical theory of Kirillov model (see for example \cite[\S~3.5]{BZI}), we have $\pi_{\infty}(\Delta)|_{M(\Q_p)}\cong W_E$. By  (\ref{pialgdual}) and using (\ref{picdual}) (\ref{fvee}), we see $\Ker(\delta^{\vee})|_{M(\Q_p)} \cong W_E \otimes_E (x^{-1} \otimes x^{1-k})$. We define $F(\Ker(\delta^{\vee}))$ exactly in the same way as for $\GL_2(\Q_p)$-representations (noting in the definition of $F(-)$, we actually only use the $M(\Q_p)$-action). By \cite[Lemma 3.3.5 (2)]{BD2},  $F(\Ker(\delta^{\vee}))$ is representable by $\cR_E(x^{-1})/t$.  

By (\ref{commd}), we have a commutative diagram
\begin{equation}
	\begin{CD} \pi_c(\Delta,k)^{\vee}/u_+ \pi_c(\Delta,k)^{\vee} @>>> \pi^{\vee}/u_+ \pi^{\vee} \\
		@V \jmath_1 VV @VVV \\
		\check{\Delta}_r/t @>>> \check{\Delta}_r/t \oplus \cR_E^r(x^{1-k})/t,
		\end{CD}
\end{equation}
such that each vertical map becomes surjective if we tensor the corresponding source by $\cR_E^r$. 
As the bottom horizontal map is obviously injective, we deduce using (\ref{snake}) that $\pi_{\alg}(\Delta,k)^{\vee}[u_+] \subset \Ker \jmath_1$ and hence $\jmath_1$ factors through (a continuous $\cR_E^+$-linear $(\psi, \Gamma)$-equivariant map) 
\begin{equation*}
	\jmath_1': \Ker(\delta^{\vee})^{\vee} \lra \check{\Delta}_r/t
\end{equation*}
which is surjective after tensoring the source by $\cR_E^r$. However, as $F(\Ker(\delta^{\vee}))$ is represented by $\cR_E(x^{-1})/t$, we see $\jmath_1'$ factors through (enlarging $r$ if needed) $\cR_E^r(x^{-1})/t \ra \check{\Delta}_r/t$, which can not be surjective, a contradiction.

\noindent (2) We show $M (\neq 0)$ is stabilized by $\GL_2(\Q_p)$ hence isomorphic to $\pi_{\alg}(\Delta,k)^{\vee}$, which will lead to a contradiction (and will conclude the proof of the theorem) as the extension $\pi$ is non-split. We begin with the following claim. 
	
	\noindent \textbf{Claim: } For $v\in \pi^{\vee}$, the followings are equivalent:
	\begin{enumerate}
		\item[(1)] $v\in M$,
		\item[(2)] $t^k v=0$,
		\item[(3)] $t^n v=0$ for $n$ sufficiently large.
	\end{enumerate}
We prove the claim. Since $M\hookrightarrow \pi_{\alg}(\Delta,k)^{\vee}$ is $\cR_E^+$-equivariant and $\pi_{\alg}(\Delta,k)^{\vee}$ is annihilated by $t^k$, we see (1) $\Rightarrow$ (2). (2) $\Rightarrow$ (3) is trivial. Suppose $t^n v=0$ for some $n$, then $\pr_1 \circ \jmath(t^n v)=t^n \pr_1 \circ \jmath(v)=0$. Since $\check{\Delta}_r$ has no $t$-torsion,  we see $\pr_1\circ \jmath(v)=0$, i.e. $v\in M$. 

For $v\in \pi^{\vee}$, $b\in B(\Q_p)$, we have $t^n (bv)=(u_+)^n\cdot (b v)=b (\Ad_{b^{-1}} (u_+)^n \cdot v)$. If $t^n v=0$, then $\Ad_{b^{-1}} (u_+)^n \cdot v=0$ thus $t^n (bv)=0$. By the claim, we see $M \subset \pi_{\alg}(\Delta,k)^{\vee}$ is stabilized by $B(\Q_p)$. 

Next we show $M$ is stabilized by $u_-=\begin{pmatrix}
0 & 0 \\ 1 & 0
\end{pmatrix}\in \gl_2$. Since $M$ is a $B(\Q_p)$-submodule of $\pi_{\alg}(\Delta,k)^{\vee}$,  we see for any $v\in M$, the $\ub$-module generated by $v$ is finite dimensional and is spanned by eigenvectors of $h=\begin{pmatrix}
1 & 0 \\ 0 & -1
\end{pmatrix}\in \gl_2$.  Using the relation $[(u_+)^{n+1},u_-]=n (u_+)^n(h+n)$ in $\text{U}(\gl_2)$, we deduce  for $v\in M$, $t^n(u_-\cdot v)=(u_+)^n u_- \cdot v$=0 for $n$ sufficient large and hence $u_- \cdot v\in M$ by the claim. Consequently, $M$ is a $\text{U}(\gl_2)$-submodule of $\pi^{\vee}$ and the injection $M\hookrightarrow \pi_{\alg}(\Delta,k)^{\vee}$ is $\text{U}(\gl_2)$-equivariant. We deduce then any vector $v$ in $M$ is annihilated by $(u_-)^k$.

Let $w=\begin{pmatrix} 0 & 1 \\ 1 & 0 \end{pmatrix}\in \GL_2(\Q_p)$. For $v\in M$, we have $t^k w v=w (\Ad_w(u_+)^k \cdot v)=w (u_-^k \cdot v)=0$.  Thus $M$ is stabilized by $w$, hence is stabilized by $\GL_2(\Q_p)$ (recalling $M$ is $B(\Q_p)$-invariant). Since $\pi_{\alg}(\Delta,k)$ is irreducible, we deduce $M\cong \pi_{\alg}(\Delta,k)^{\vee}$. As previously discussed, this finishes the proof. 
\end{proof}
\begin{proof}[Proof of Corollary \ref{corext}] The ``if" part is a trivial consequence of Theorem \ref{thmext}. Assume now	$$\Ext^1_{\GL_2(\Q_p)}(\pi_c(\Delta,k), \pi_{\infty} \otimes_E W)\neq 0.$$ Then $\pi_{\infty} \otimes_E W$ has the same central character and infinitesimal character as 
$\pi_c(\Delta,k)$. By \cite[Prop. 3.1.1]{Colm18}, one  deduces $W\cong \Sym^{k-1} E^2$. 
	
Similarly as in Corollary \ref{corcE} (using Corollary \ref{corchD}, \cite[Thm. 3.3.1 \& Thm. 4.1.5]{BD2}), we have a morphism
\begin{equation*}\cE': 
\Ext^1_{\GL_2(\Q_p)}\big(\pi_c(\Delta,k), \pi_{\infty} \otimes_E \Sym^{k-1} E^2\big) \lra \Ext^1_{(\varphi, \Gamma)}(\cR_E(x^{1-k})/t^k, \check{\Delta}).
\end{equation*}
By the same argument as in the proof of Theorem \ref{thmext} (with $\pi_{\alg}(\Delta,k)$ replaced by $\pi_{\infty} \otimes_E \Sym^{k-1} E^2$), the morphism is injective. Suppose $\pi_{\infty}$ is not isomorphic to $\pi_{\infty}(\Delta)$ and there exists a non-split $[\pi]\in \Ext^1_{\GL_2(\Q_p)}\big(\pi_c(\Delta,k), \pi_{\infty} \otimes_E \Sym^{k-1} E^2\big)$. Let $[\check{D}]:=\cE'([\pi])$, and let $[\pi(D)]:=\cE^{-1}([\check{D}])$. The pull-back of $\pi_c(\Delta,k)$ of $\pi(D) \oplus \pi\twoheadrightarrow \pi_c(\Delta,k)^{\oplus 2}$ via the diagonal map gives a non-split extension $\tilde{\pi}$ of $\pi_c(\Delta,k)$ by $(\pi_{\infty} \otimes_E \Sym^{k-1} E^2) \oplus \pi_{\alg}(\Delta,k)$ satisfying $\tilde{\pi}/\pi_{\alg}(\Delta,k)\cong \pi$ and $\tilde{\pi}/(\pi_{\infty} \otimes_E \Sym^{k-1} E^2)\cong \pi(D)$. By \cite[Thm. 4.1.5]{BD2}, $F(\tilde{\pi})$ is representable by an extension of $(\cR_E(x^{1-k})/t^k)^{\oplus 2}$ by $\check{\Delta}$ such that the pull-back of either of the two factors $\cR_E(x^{1-k})/t^k$ is isomorphic to $\check{D}$. We deduce then $F(\tilde{\pi})$ is representable by $\check{D}\oplus \cR_E(x^{1-k})/t^k$.
We have thus a continuous $\cR_E^+$-linear $(\psi, \Gamma)$-equivariant morphism when $r$ is sufficiently large:
\begin{equation*}
\jmath: \tilde{\pi}^{\vee} \lra \check{D}_r\oplus \cR_E^r(x^{1-k})/t^k
\end{equation*}
such that the morphism becomes surjective if we tensor the source by $\cR_E^r$ (by similar arguments as for the surjectivity of (\ref{jten})). Let $M$ be the kernel of $\pr_1 \circ \jmath$. Since $F(\pi(D))(\check{D})\cong \End_{(\varphi, \Gamma)}(\check{D})\cong E$,  the restriction of $\pr_1 \circ \jmath$ on $\pi(D)^{\vee}$ is equal, up to non-zero scalars, to the morphism $\pi(D)^{\vee} \ra \check{D}$ in \cite[Prop. 2.20]{Colm18} hence is injective. Using a similar  exact sequence as in (\ref{snake}) with $\pi$ replaced by $\pi(D)$, we can deduce $\pi(D)[u_+]|_{M(\Q_p)}\cong (W_E \otimes_E (x^{-1} \otimes x^{k-1}))^{\oplus 2}$ (see also \cite[Remark 3.3.2]{Colm18}). Now by the same arguments as in the proof of Theorem \ref{thmext} \big(with $\pi_c(\Delta,k)$ replaced by $\pi(D)$ and $\pi_{\alg}(\Delta,k)$ replaced by $\pi_{\infty} \otimes_E \Sym^{k-1} E^2$\big), one can prove $M$ is $\GL_2(\Q_p)$-invariant, and is isomorphic to $(\pi_{\infty} \otimes_E \Sym^{k-1} E^2)^{\vee}$. Hence $\tilde{\pi}\cong \pi(D)\oplus \pi_{\infty} \otimes_E \Sym^{k-1} E^2$ and then $\pi\cong \tilde{\pi}/\pi_{\alg}(\Delta,k)\cong \pi_c(\Delta,k) \oplus \pi_{\infty} \otimes_E \Sym^{k-1} E^2$ \big(noting $\Hom_{\GL_2(\Q_p)}\big(\pi_{\alg}(\Delta,k), \pi_{\infty} \otimes_E \Sym^{k-1} E^2\big)=0$ by assumption\big), a contradiction.
\end{proof}
\subsection*{Acknowledgement} This note is motivated by my joint work \cite{BD2} with Christophe Breuil, and I also thank him for correspondences on the problem. I thank the anonymous referee for the rapid and  sharp comments. The work is supported by the NSFC Grant No. 8200905010 and No. 8200800065. 

\end{document}